\newtheorem{theorem}{Theorem}[section]
\newtheorem{lemma}[theorem]{Lemma}
\newtheorem{remark}[theorem]{Remark}
\newcommand{\uld}[1]{\underline{d#1}}
\newcommand\rtil{\tilde{r}}
\newcommand\radius{\mathfrak{r}}
\newcommand\taubar{\overline{\tau}}
\definecolor{darkcyan}{rgb}{0.,0.5,0.5}
\definecolor{darkgreen}{rgb}{0.,0.7,0.}
\newcommand{\commBK}[1]{}
\newcommand{\skipKuznetsov}[1]{}
\journal{Journal of Differential Equations}
\begin{document}

\begin{frontmatter}

\title{Well-posedness of the time-periodic Jordan-Moore-Gibson-Thompson equation}
\author{Barbara Kaltenbacher, University of Klagenfurt}
\address{${}^1$Alpen-Adria-Universit\"at Klagenfurt (barbara.kaltenbacher@aau.at)
}

\begin{abstract}
Motivated by applications of nonlinear ultrasonics under continuous wave excitation, we study the Jordan-Moore-Gibson-Thompson (JMGT) equation -- a third order in time 
quasilinear PDE -- under time periodicity conditions. 
Here the coefficient of the third order time derivative is the so-called relaxation time and a thorough understanding of the limiting behaviour for vanishing relaxation time is essential to link these JMGT equations to classical second order models in nonlinear acoustics,
 
As compared to the meanwhile well understood initial value problem for JMGT, the periodic setting poses substantial challenges due to a loss of temporal regularity, while the analysis still requires an $L^\infty$ control of solutions in space and time in order to maintain stability or equivalently, to avoid degeneracy of the second time derivative coefficient.

We provide a full well-posedness analysis with and without gradient nonlinearity, as relevant for modelling 
non-cumulative
nonlinear effects, under practically relevant mixed boundary conditions. 
The source-to-state map is thus well-defined and we additionally show it to be Lipschitz continuously differentiable, a result that is useful for inverse problems applications such as acoustic nonlinearity tomography. The energy bounds derived for the well-posedness analysis of periodic JMGT equations also allow to fully justify the singular limit for vanishing relaxation time.
\end{abstract}

\begin{keyword}
nonlinear wave equation, periodic solutions,  well-posedness, singular limits
\end{keyword}


\end{frontmatter}

\section{Introduction}
Motivated by applications, in particular of high-intensity ultrasound~\cite{abramov,kennedy2003high, wu2001pathological, yoshizawa2009high} and nonlinear ultrasound tomography~\cite{nonlinparam1, duck2002nonlinear,  nonlinparam3, nonlinparam2, ZHANG20011359,nonlinearity_imaging_JMGT,nonlinearity_imaging_Westervelt}, the field of modeling and analysis of nonlinear acoustics has recently found much interest.

In this paper, we analyze a class of nonlinear third order in time acoustic wave equations that has been put forward in~\cite{JordanMaxwellCattaneo09,JordanMaxwellCattaneo14} and initial-boundary value problems for these PDEs have been intensively studied, along with their linearizations, in the recent literature; we refer to, 
e.g.,~\cite{BongartiCharoenphonLasiecka20,bucci2020regularity,chen2019nonexistence,DellOroPata,KLM12_MooreGibson,KLP12_JordanMooreGibson,JMGT,JMGT_Neumann,LiuTriggiani13,MarchandMcDevittTriggiani12,PellicerSolaMorales,racke2020global} for some selected results on well-posedness, regularity of solutions, and long-term behavior of initial value problems for these equations.  Also the influence of memory terms on the solution behaviour has been extensively investigated; see, e.g.,~\cite{dell2016moore, dell2020note,LasieckaWang15b, LasieckaWang15a,  lasiecka2017global, LasieckaWang15a} and the references provided therein.
Adhering to the meanwhile commonly used denomination as Jordan-Moore-Gibson-Thompson JMGT type equations (with ``J'' being skipped in the linear case), and distinguishing between two types of nonlinearity inherited from classical models (see also \eqref{Westervelt}, \eqref{Kuznetsov} below) we will study 
\begin{itemize}
\item 
the JMGT-Westervelt equation for the acoustic pressure $u$
\begin{equation}\label{JMGT-Westervelt}
\tau u_{ttt}+u_{tt}-c^2\Delta u -b \Delta u_t + \eta (u^2)_{tt} +f=0,
\end{equation}
\item 
the JMGT-Kuznetsov equation for the acoustic velocity potential $u$
\begin{equation}\label{JMGT-Kuznetsov}
\tau u_{ttt}+u_{tt}-c^2\Delta u -b \Delta u_t +\bigl(\tilde{\eta}u_t^2+|\nabla u|^2\bigr)_t +f=0,
\end{equation}
\end{itemize}
on some space-time domain $(0,T)\times\Omega$ with $\Omega\in\mathbb{R}^d$, $d\in\{1,2,3\}$.
Here $\tau>0$ is the relaxation time, $c>0$ the speed of sound and $b>0$ an attenuation coefficient that is related to the diffusivity of sound.
We will consider both versions in a unified way  
\begin{equation}\label{JMGT-linearized}
\tau u_{ttt}+\alpha u_{tt}-c^2\Delta u -b \Delta u_t +\rtil=0,
\end{equation}
with the two cases
\begin{equation}\label{alphar}
\begin{aligned}
&
\alpha=1+2\eta u, \ \rtil=2\eta (u_t)^2+f 
\\
&
\alpha=1+2\tilde{\eta} u_t, \ \rtil=2\nabla u\cdot\nabla u_t+f 
.
\end{aligned}
\end{equation}
With general given $\alpha$ and $\rtil$, \eqref{JMGT-linearized} also comprises a linear MGT equation, that will be used in the analysis.

Motivated by applications with waves exhibiting temporal periodicity (e.g., under sinosoidal continuous wave excitations), we consider the 
conditions
\begin{equation}\label{periodic}
u(T)=u(0), \quad u_t(T)=u_t(0), \quad \tau u_{tt}(T)=\tau u_{tt}(0) 
\end{equation}
for some $T>0$; in particular, if $\tau=0$ and thus the equations above are second order in time, the periodicity condition on $u_{tt}$ is omitted.
An analysis in this time periodic setting is so far missing, to the best of the author's knowledge, but is highly desirable due to its practical relevance.

For the most basic model of nonlinear acoustics, the Westervelt equation
\begin{equation}\label{Westervelt}
u_{tt}-c^2\Delta u -b \Delta u_t + \eta (u^2)_{tt} +f=0
\end{equation}
an analysis under periodicity conditions \eqref{periodic} has been carried out in \cite{periodicWestervelt, periodicWest_2}.
Taking into account 
non-cumulative
nonlinear effects leads to the more advanced Kuznetsov equation
\begin{equation}\label{Kuznetsov}
u_{tt}-c^2\Delta u -b \Delta u_t +\bigl(\tilde{\eta}u_t^2+|\nabla u|^2\bigr)_t +f=0.
\end{equation}
Both can obviously be obtained by formally setting $\tau=0$ in \eqref{JMGT-Westervelt}, \eqref{JMGT-Kuznetsov}.
One of the tasks here will be to justify the singular limit $\tau\searrow0$ in appropriate topologies.

On the boundary $\partial\Omega=\Gamma_a\cup\Gamma_i\cup\Gamma_N\cup\Gamma_D$ of the spatial domain where \eqref{JMGT-Westervelt} or \eqref{JMGT-Kuznetsov} or \eqref{JMGT-linearized} is supposed to hold, in order to capture a wide range of practically relevant scenarios, we impose  
mixed boundary conditions 
\begin{equation}\label{bndy}
\begin{aligned}
&\partial_\nu u+\beta u_t+\gamma u =0 \mbox{ on }\Gamma_a\cup\Gamma_i\cup\Gamma_N\\
&u =0 \mbox{ on }\Gamma_D\\
&\text{ where $\gamma=0$ on $\Gamma_N$, $\tfrac{1}{\gamma}\vert_{\Gamma_i}\in L^\infty(\Gamma_i)$, $\beta=0$ on $\Gamma_i\cup\Gamma_N$,}
\end{aligned}
\end{equation}
and consider homogeneous boundary data for simplicity \footnote{pointing to, e.g.,  \cite{FSI} for a standard homogenization approach, applied to a nonlinear wave equation to deal with inhomogeneous boundary conditions}.
This corresponds to 
\underline{a}bsorbing boundary conditions on $\Gamma_a$ to avoid spurious reflections and mimick open domain wave propagation; as well as 
\underline{i}mpedance, \underline{N}eumann, and \underline{D}irichlet boundary conditions on $\Gamma_i$, $\Gamma_N$, and $\Gamma_D$, to model damping, sound-hard and sound-soft boundary parts, respectively. 
We will assume that
\begin{equation}\label{GammaiGammaD}
\text{meas}(\Gamma_i)+\text{meas}(\Gamma_D)>0,
\end{equation}
and in norms frequently use the abbreviation  
\begin{equation}\label{Gamma}
\Gamma=\Gamma_a\cup\Gamma_i. 
\end{equation}

An essential difficulty in the analysis of the PDEs \eqref{JMGT-Westervelt}, \eqref{JMGT-Kuznetsov}, \eqref{Westervelt}, \eqref{Kuznetsov} arise due to the potential degeneracy of the second time derivative coefficient $\alpha$, which depends on the state $u$, cf. \eqref{alphar}.
For the second order models \eqref{Westervelt}, \eqref{Kuznetsov}, it is immediately clear that they lose their wave type character as soon as $\alpha$ fails to be positive (and bounded away from zero). 
But also for the third order in time equations \eqref{JMGT-Westervelt}, \eqref{JMGT-Kuznetsov}, nondegeneracy of $\alpha$ is critical. In fact, as known from previous work on the (J)MGT equation, a precondition for stability of solutions is strict positivity of the coefficient $\frac{b}{c^2}-\frac{\tau}{\alpha}$.
Correspondingly, with $\tau\in[0,\overline{\tau}]$, we will require
\begin{equation}\label{stabilitycond}
\frac{b}{c^2}-\frac{\overline{\tau}}{\alpha} >0  \quad \text{ and bounded away from zero.}
\end{equation}
which is a condition on the interplay between attenuation $b$, sound speed $c$, relaxation time $\tau$ and second time dervative coefficient $\alpha$.
\footnote{In fact, the damping coeffcient $b$ usually takes the form $b=\delta+\tau c^2$ with $\delta>0$ being the diffusivity of sound. With this, \eqref{stabilitycond} reads as $\frac{\delta}{c^2}> \frac{\overline{\tau}}{\alpha}-\tau$} 
The latter thus needs to be bounded away from zero, which in view of \eqref{alphar}, exhibiting $\alpha$ as a perturbation of unity, necessitates the control of $\eta u$ or $\tilde{\eta} u_t$ in $L^\infty(0,T;L^\infty(\Omega))$. 
In the existing literature referred to above, this is acheved by deriving energy bounds in sufficiently strong Sobolev spaces and using their continuous embedding into $L^\infty(0,T;L^\infty(\Omega))$.
This is inhibited here in two ways: By a temporal loss of regularity due to the periodicity (rather than initial) conditions and by a spatial regularity loss due to mixed boundary conditions.

The key element of our proofs is therefore to derive new energy estimates that cope with both. These are enabled by a broad testing strategy along with a dedicated Galerkin discretization to guarantee feasibility of this testing.

\medskip

The goals of this work are to
\begin{itemize}
\item analyze periodic JMGT type equations for the first time;\\ 
Challenges due to periodicity conditions as compared to the initial value problems studied in the literature so far arise due to the lack of $L^\infty$ in time estimates directly from energy identities, cf. Remark~\ref{rem:compare_init} 
\commBK{Note that terms of the form $\int_0^T \frac{d}{dt} \cdots dt$ just vanish and Gronwall's inequality is not available};
\item justify limits as the relaxation time $\tau$ tends to zero;
\item prove differentiability of the source-to-state map $\mathcal{S}:f\mapsto u$;\\
This will be useful for studying inverse problems related to \eqref{JMGT-Westervelt}, \eqref{JMGT-Kuznetsov} such as nonlinear ultrasound tomography;
\item allow for spatially varying coefficients (with minimal regularity):\\ 
This is motivated by imaging applications, where coefficients may exhibit jumps across tissue or material interfaces. 
\item allow (as much as possible) mixed boundary conditions to achieve a realistic setting; 
Since this impairs regularity of the solutions, we resort to Stampacchia's / De Giorgi's technique. 
(cf., e.g., \cite[Proposition 4.1]{Consiglieri:2014},
\cite[Theorem 4.5 and Section 7.2.1]{Troeltzsch2010}) to control stability via $L^\infty$ bounds in space.
\item An additional novelty in the analysis methodology will be the use of a periodic space-time Galerkin discretization to prove existence of solutions.
\end{itemize}

The structure of the paper is as follows:\\
Section~\ref{sec:mainresults} lists the main results, which are 
\begin{itemize}
\item linear well-posendess on several smoothness levels;
\item nonlinear well-posendess for \eqref{JMGT-Westervelt}, \eqref{JMGT-Kuznetsov}, 
\skipKuznetsov{and \eqref{Kuznetsov},} 
as well as Lipschitz continuous Fr\'{e}chet differentiability of the source-to-state map $\mathcal{S}:f\mapsto u$;
\item singular limits as $\tau\to0$.
\end{itemize}
The proofs are contained in Sections~\ref{sec:linearwellposedness} and \ref{sec:nonlinearwellposedness}.

\subsection*{Notation and some useful facts}
\paragraph{Function spaces:}
For the use of H\"older type estimates in Lebesgue spaces $L^p(\Omega)$, we will use the 
dual index $p^*:=\frac{p}{p-1}$ for $p\in[1,\infty]$ as well as the index $\hat{p}:=2(\tfrac{p}{2})^*=\frac{2p}{p-2}$ for $p\in[2,\infty]$.
The usual Sobolev (Hilbert) spaces are denoted by $H^s(\Omega)$, $s\in(0,\infty)$; some more specifically defined versions are
\begin{equation}\label{H1D}
\begin{aligned}
&H^1_D(\Omega)=\{\phi\in H^1(\Omega)\, : \, \phi=0\text{ on }\Gamma_D\}\\
&H^2_\Delta(\Omega)=\{\phi\in H^1(\Omega)\, : \, \Delta \phi\in L^2(\Omega)\}\\
&H^3_\Delta(\Omega)=\{\phi\in H^1(\Omega)\, : \, \nabla\Delta \phi\in L^2(\Omega)\}.
\end{aligned}
\end{equation}
We will frequently use Bochner spaces and make use of the abbreviations
$L^p(X):=L^p(0,T;X)$, $H^s(X):=H^s(0,T;X)$.\\
By $\text{tr}_{\Gamma_j}$, $j\in\{a,i,N,D\}$ we denote the trace operator on the respective boundary part; we will often skip it when it is clear that we are referring to boundary values of $u$, e.g., under boundary norms.

\paragraph{Constants}
We will write $C_{X\to Y}^\Omega$ for the norm of the embedding $X(\Omega)\to Y(\Omega)$. 

Generally, constants will be denoted by $C$ with certain sub- and/or superscripts indicating their meaning and dependencies. Since we will track dependency on the small constants $\tau$, $\taubar$ $\in[0,1]$ explicitely, all constants denoted by $C$ will be independent of $\tau$ and $\taubar$.

Sometimes we will just write ``$a\lesssim b$'' for ``$a\leq C b$ for some $C>0$ independent of $a,b$''.

\paragraph{Inequalities}
Poincar\'{e} Friedrichs type estimates under condition \eqref{GammaiGammaD}:
\begin{equation}\label{PF}
\|v\|_{H^\ell(\Omega)}^2\leq C_{\ell\,PF}\Bigl(\|\nabla v\|_{L^2(\Omega)}^2+\|v\|_{L^2(\Gamma_i)}^2\Bigr), \quad  \ell\in\{0,1\}, \ v\in H^1(\Omega)
\end{equation}
Elliptic regularity:
\begin{equation}\label{Cells}
\begin{aligned}
&\|v\|_{H^{1+s}(\Omega)}^2\leq C_{ell,s}\Bigl(\|v\|_{L^2(\Omega)}^2+\|\Delta v\|_{L^2(\Omega)}^2+\|\partial_\nu v\|_{H^{s-1/2}(\partial\Omega)}^2\Bigr), \\ 
&\qquad\qquad\qquad v\in L^2(\Omega), \ \Delta v\in L^2(\Omega), \ \partial_\nu v\in H^{s-1/2}(\partial\Omega) 
, \ s\in(0,1]
\end{aligned}
\end{equation}
\begin{equation}\label{Cell2}
\begin{aligned}
&\|v\|_{H^3(\Omega)}^2\leq C_{ell,2}\Bigl(\|v\|_{L^2(\Omega)}^2+\|\Delta v\|_{H^1(\Omega)}^2+\|\partial_\nu v\|_{H^{3/2}(\partial\Omega)}^2\Bigr), \\ 
&\qquad\qquad\qquad v\in H^1(\Omega), \ \Delta v\in H^1(\Omega), \ \partial_\nu v\in H^{3/2}(\partial\Omega), 
\end{aligned}
\end{equation}
see, e.g., \cite{Grisvard},
where in its turn $\|v\|_{L^2(\Omega)}^2$ can be estimated by means of \eqref{PF};\\
In case $\partial\Omega=\Gamma_D\cup\Gamma_i$, this also implies the coercivity estimates
\begin{equation}\label{Cell0}
\begin{aligned}
&\|v\|_{H^1(\Omega)}^2\leq C_{ell,0}\|\Delta v\|_{H^1(\Omega)}^2, \\  
&\qquad\qquad\qquad v\in L^2(\Omega), \ \Delta v\in L^2(\Omega), \ v=0\text{ on }\Gamma_D, \ \partial_\nu v+\gamma v=0\text{ on }\partial\Omega\setminus\Gamma_D. 
\end{aligned}
\end{equation}
with $C_{ell,0}:= M^2\, C_{0\,PF}\, C_{1\,PF}$ and $M=\max\{1,\|\tfrac{1}{\gamma}\vert_{\Gamma_i}\|_{L^\infty(\Gamma_i)}\}$, which follows from 
\[
\begin{aligned}
&\frac{1}{M}\Bigl(\|\nabla v\|_{L^2(\Omega)}^2+\|v\|_{L^2(\Gamma_i)}^2\Bigr)
\leq \int_\Omega |\nabla v|^2\, dx + \int_{\Gamma_i} \gamma v^2\, dS\\
&=\int_\Omega(-\Delta v)\, v\, dx
\leq \frac{M\, C_{0\,PF}}{2}\|\Delta v\|_{L^2(\Omega)}^2+\frac{1}{2M\, C_{0\,PF}}\|v\|_{L^2(\Omega)}^2\\
&\leq \frac{M\, C_{0\,PF}}{2}\|\Delta v\|_{L^2(\Omega)}^2+\frac{1}{2M\, C_{0\,PF}}
C_{0\,PF}\Bigl(\|\nabla v\|_{L^2(\Omega)}^2+\|v\|_{L^2(\Gamma_i)}^2\Bigr).
\end{aligned}
\]

\section{Main results}\label{sec:mainresults}

We first of all provide a well-posedness result for the linearized problem \eqref{JMGT-linearized} on three levels of regularity, ``low'', ``medium'' and ``high'', expressed by means of the energy functionals  
\begin{equation}\label{energies_mainresults}
\begin{aligned}
\mathcal{E}_{lo}(u)&=
\taubar\tau^2\|u_{ttt}\|_{L^2(H^1(\Omega)^*)}^2+
\taubar\|u_{tt}\|_{L^2(L^2(\Omega))}^2 + \|u\|_{H^1(H^1(\Omega))}^2\\
&\qquad+ \taubar \|u_{tt}\|_{L^2(L^2(\Gamma_a))}^2+\|\sqrt{\gamma} u\|_{H^1(L^2(\Gamma))}^2\\
\\[1ex]
\mathcal{E}_{me}(u)&=
\taubar\tau^2\|u_{ttt}\|_{L^2(L^2(\Omega))}^2+
\taubar\|u_{tt}\|_{L^2(H^1(\Omega))}^2 + \|\Delta u\|_{H^1(L^2(\Omega))}^2\\
&\qquad+ \taubar\tau \|u_{ttt}\|_{L^2(L^2(\Gamma_a))}^2+ \|\sqrt{\gamma} u\|_{H^2(L^2(\Gamma))}^2\\
\\[1ex]
\mathcal{E}_{hi}(u)&=
\taubar\tau^2\|u_{ttt}\|_{L^2(H^1(\Omega)^*)}^2+
\taubar\|\Delta u_{tt}\|_{L^2(L^2(\Omega))}^2 + \|\nabla\Delta u\|_{H^1(L^2(\Omega))}^2\\
&\qquad
+ \taubar \|\Delta u_{tt}\|_{L^2(L^2(\Gamma_a))}^2 + \|\gamma \Delta u\|_{H^1(L^2(\Gamma))}^2.
\end{aligned}
\end{equation}
We will also use 
\[
\bar{\mathcal{E}}_{me}(u)=\mathcal{E}_{me}(u)+\mathcal{E}_{lo}(u), \qquad
\bar{\mathcal{E}}_{hi}(u)=\mathcal{E}_{hi}(u)+\mathcal{E}_{me}(u)+\mathcal{E}_{lo}(u).
\]

The following assumptions will be made on the coefficients:
\begin{equation}\label{ass_coeff}
\begin{aligned}
&
b, \tfrac{1}{b}, c, \tfrac{1}{c}\in L^\infty(\Omega;\mathbb{R}^+), \quad 
\alpha, \tfrac{1}{\alpha}\in L^\infty(0,T;L^\infty(\Omega;\mathbb{R}^+)), \ \alpha(T)=\alpha(0)\\
&\beta,\gamma\in L^\infty(\Gamma_a\cup\Gamma_i\cup\Gamma_N;\mathbb{R}^+), \quad
\tfrac{1}{\gamma}\vert_{\Gamma_i}\in L^\infty(\Gamma_i;\mathbb{R}^+), \quad
\tfrac{1}{\beta}\vert_{\Gamma_a}\in L^\infty(\Gamma_a;\mathbb{R}^+).
\end{aligned}
\end{equation}
Additional smoothness will be needed on $b$, $c$, $\alpha$, $\rtil$, depending on the level of solution regularity:
\begin{itemize}
\item for the low regularity setting:
\begin{equation}\label{smallnesscoeffs_lo}
\begin{aligned}
&\max\{\|\alpha_t\|_{L^\infty(L^{q^*}(\Omega)},\,
\|\nabla b\|_{L^\infty(L^2(\Omega))}, \,\|\nabla c^2\|_{L^2(L^2(\Omega))},\, 
\\&\hspace*{4cm}
\|\Delta b\|_{L^\infty(L^{\hat{p}}(\Omega))},\,\|\Delta c^2\|_{L^2(L^{\hat{p}}(\Omega))}\}\leq\radius_{lo}\\
&\text{ and } \partial_\nu b=0, \ \partial_\nu c^2=0\text{ on }\partial\Omega, 
\end{aligned}
\end{equation}
\item for the medium regularity setting:
\begin{equation}\label{smallnesscoeffs_me}
\|\nabla\alpha\|_{L^\infty(L^{\hat{p}}(\Omega))} \leq \radius_{me}
\end{equation}
\item for the high regularity setting:
\begin{equation}\label{smallnesscoeffs_hi}
\max\{\|\Delta\alpha\|_{L^\infty(L^2(\Omega))},\,
\|\nabla\alpha\|_{L^\infty(L^{\hat{p}}(\Omega))},\,
\|\nabla b\|_{L^\infty(\Omega)},\,
\|\nabla c^2\|_{L^\infty(\Omega)}\} \leq \radius_{hi}
\end{equation}
\end{itemize}
with
\begin{equation}\label{qstarphat}
\min\{q^*,\tfrac{\hat{p}}{2}\}\begin{cases}\geq 1\text{ for }d=1\\> 1\text{ for }d=2 \\ \geq\frac{d}{2}\text{ for }d\geq 3\end{cases}
\end{equation}
for some $\radius_{lo}, \ \radius_{me}, \ \radius_{hi}>0$.
Note that with $\hat{p}=2(\tfrac{p}{2})^*$, the constraint on $\hat{p}$ in \eqref{qstarphat} implies that $H^1(\Omega)$ continuously embeds into $L^p(\Omega)$.

The proof of the following theorem is carried out in Section~\ref{sec:linearwellposedness}.
\begin{theorem}\label{thm:JMGT-linearized}
Let $\Omega$ be a Lipschitz domain and $T>0$; assume that $\taubar$, $b$, $c$, $\alpha$ satisfy  \eqref{stabilitycond} and \eqref{ass_coeff}.
\begin{enumerate}
\item[(i)] There exist $\radius_{lo}>0$, $C_{lo}>0$ such that for any $\tau\in[0,\taubar]$, and any $b$, $c$, $\alpha$ satisfying \eqref{smallnesscoeffs_lo}
and for any 
\begin{equation}\label{Zlo}
\rtil\in Z_{lo}:=L^2(0,T;L^2(\Omega)),
\end{equation}
the time periodic system
\eqref{JMGT-linearized}, 
\eqref{periodic},
\eqref{bndy}
has a unique solution  
\begin{equation}\label{Ulo}
u\in U_{lo}:=H^3(0,T;H^1(\Omega)^*)\cap H^2(0,T;L^2(\Omega))\cap H^1(0,T;H^1(\Omega))
\end{equation}
and 
\begin{equation}\label{enest_lo}
\mathcal{E}_{lo}(u)\leq C_{lo}( \taubar\|\rtil\|_{L^2(L^2(\Omega))}^2+\|\rtil\|_{L^2(H^1(\Omega)^*)}^2).
\end{equation} 
\item[(ii)] There exist 
$\radius_{lo}>0$, 
$\radius_{me}>0$, $C_{me}>0$ such that 
for any $\tau\in[0,\taubar]$, and any $b$, $c$, $\alpha$ satisfying \eqref{smallnesscoeffs_lo},
\eqref{smallnesscoeffs_me},
and for any 
\begin{equation}\label{Zme}
\begin{aligned}
\rtil\in Z_{me}&:=\{\rtil=\rtil^\nabla+\rtil^t\in L^2(0,T;L^2(\Omega))\ : \\  
&\hspace*{1cm}\rtil^\nabla\in H^1(0,T;L^2(\Omega)), \ \rtil^t\in L^2(0,T;H^1(\Omega)), \ \rtil^t=0\text{ on }\Gamma_D\}\\
&\subseteq H^1(0,T;L^2(\Omega))+L^2(0,T;H^1(\Omega))
,
\end{aligned}
\end{equation} 
the solution $u$ to the time periodic system
\eqref{JMGT-linearized}, 
\eqref{periodic},
\eqref{bndy}
satisfies the additional regularity
\begin{equation}\label{Ume}
u\in U_{me}:=H^3(0,T;L^2(\Omega))\cap H^2(0,T;H^1(\Omega))\cap H^1(0,T;
H^2_\Delta(\Omega)\cap L^\infty(\Omega))
\end{equation}
\begin{equation}\label{enest_me}
\begin{aligned}
\bar{\mathcal{E}}_{me}(u)\leq C_{me}(
\mathcal{E}_{lo}(u)
&+\taubar^2\|\rtil^\nabla_t\|_{L^2(L^2(\Omega))}^2
+\taubar\|\nabla \rtil^t\|_{L^2(L^2(\Omega))}^2\\
&+\|\rtil^t\|_{L^2(L^2(\Gamma))}^2
+\|\rtil\|_{L^2(L^2(\Omega))}^2
).
\end{aligned}
\end{equation} 
If additionally $\Omega$ is a $C^{1,1/2}$ domain and $\Gamma_D=\emptyset$ then $u\in H^1(0,T;H^{3/2}(\Omega))$.
\item[(iii)] There exist 
$\radius_{lo}>0$, 
$\radius_{me}>0$, 
$\radius_{hi}>0$, $C_{hi}>0$ such that 
for any $\tau\in[0,\taubar]$, and any $b$, $c$, $\alpha$ satisfying \eqref{smallnesscoeffs_lo}, \eqref{smallnesscoeffs_me},
\eqref{smallnesscoeffs_hi}, 
and for any 
\begin{equation}\label{Zhi}
\rtil\in Z_{hi}:=L^2(0,T;H^2_\Delta(\Omega))
\end{equation} 
the solution $u$ to the time periodic system
\eqref{JMGT-linearized}, 
\eqref{periodic},
\eqref{bndy}
satisfies the additional regularity
\begin{equation}\label{Uhi}
u\in U_{hi}:=H^3(0,T;H^1(\Omega))\cap H^2(0,T;H^2_\Delta(\Omega))\cap H^1(0,T;H^3_\Delta(\Omega))
\end{equation} 
and 
\begin{equation}\label{enest_hi}
\bar{\mathcal{E}}_{hi}(u)\leq C_{hi}(
\mathcal{E}_{me}(u)
+\taubar \|\Delta \rtil\|_{L^2(L^2(\Omega))}^2
+\|\Delta \rtil\|_{L^2(H^1(\Omega)^*)}^2
).
\end{equation} 
If additionally $\Omega$ is a $C^{2,1}$ domain and $\partial\Omega=\Gamma_i$ with $\gamma\in W^{1/2,\infty}(\Gamma_i)\cap H^{3/2}(\Gamma_i)$, or $\partial\Omega=\Gamma_N$ or $\partial\Omega=\Gamma_D$, then $u\in H^1(0,T;H^3(\Omega))$.
\end{enumerate}
\end{theorem}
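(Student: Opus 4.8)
My plan is to treat all three parts through one common architecture: a periodic space--time Galerkin discretization on which a hierarchy of ``broad'' energy identities can be evaluated, $\tau$-uniform estimates extracted, and the limit passed; uniqueness then follows from the low-level bound, and the two closing regularity statements from global elliptic regularity on the smoother domains. For the discretization I would take $\{\varphi_j\}_j$ to be eigenfunctions of $-\Delta$ under the elliptic part of \eqref{bndy} (i.e.\ $\partial_\nu\varphi+\gamma\varphi=0$ on $\Gamma$, $\partial_\nu\varphi=0$ on $\Gamma_N$, $\varphi=0$ on $\Gamma_D$, the $\beta u_t$ term being carried in the weak form), so that $\Delta\varphi_j=-\lambda_j\varphi_j$ and the discrete functions lie in $H^3_\Delta(\Omega)$ with powers of $\Delta$ available as test objects; and $\{\psi_k\}_k$ the real trigonometric system on $(0,T)$, so that every discrete function together with all its time derivatives automatically satisfies the periodicity conditions \eqref{periodic} and the test functions $u^n,u_t^n,u_{tt}^n,-\Delta u_t^n,\dots$ needed below all stay in the discrete space. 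On the finite-dimensional space the weak form of \eqref{JMGT-linearized}--\eqref{bndy} is a \emph{square} linear system; by the Fredholm alternative, solvability is equivalent to injectivity, which is precisely the homogeneous case $\rtil=0$ of the a priori bound below. Thus one set of estimates serves both for existence of the Galerkin solutions and for their $n$- and $\tau$-uniform control.

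The heart is the low-level estimate. I would test the discrete equation with a linear combination $a\,u+b'\,u_t+c'\,u_{tt}$ (and, for $\tau>0$, a further multiple of $\tau u_{tt}$), integrate over $(0,T)\times\Omega$, and collect the boundary contributions on $\Gamma$ via \eqref{bndy}. Because the test functions are periodic, every term of the form $\int_0^T\frac{d}{dt}(\cdots)\,dt$ vanishes; this is what removes Gronwall but leaves closed identities. The stability condition \eqref{stabilitycond} is exactly what makes the left-hand quadratic form coercive, uniformly for $\tau\in[0,\taubar]$, in the dissipative quantities $\|\nabla u_t\|_{L^2(L^2(\Omega))}$, $\sqrt{\taubar}\,\|u_{tt}\|_{L^2(L^2(\Omega))}$, and the absorbing boundary terms. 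The coefficient-variation terms carrying $\alpha_t$, $\nabla b$, $\nabla c^2$, $\Delta b$, $\Delta c^2$ are estimated by H\"older with the exponents licensed by \eqref{qstarphat} (which also gives $H^1(\Omega)\hookrightarrow L^p(\Omega)$) and then absorbed using $\radius_{lo}$ from \eqref{smallnesscoeffs_lo}, the conditions $\partial_\nu b=\partial_\nu c^2=0$ killing parasitic boundary terms. The coercive part controls derivative norms only; to close at the level of the full $\mathcal{E}_{lo}(u)$ -- which also contains $\|u\|_{H^1(H^1(\Omega))}$ and a $\sqrt{\gamma}$-boundary norm -- I would split $u=\bar u+u^\perp$ into time-average and zero-time-mean part, bound $u^\perp$ by Poincar\'e in time against $\|u_t\|$, and handle $\bar u$ via the time-integrated, now purely elliptic, equation it satisfies together with \eqref{PF}; finally $\taubar\tau^2\|u_{ttt}\|_{L^2(H^1(\Omega)^*)}^2$ is recovered by reading $\tau u_{ttt}=-\alpha u_{tt}+c^2\Delta u+b\Delta u_t-\rtil$ off the equation and estimating the right-hand side in $H^1(\Omega)^*$. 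I expect this closing argument -- recovering the non-dissipative and time-mean components of $\mathcal{E}_{lo}$ without Gronwall while keeping every constant independent of $\tau$ and $\taubar$ -- to be the main obstacle.

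Parts (ii) and (iii) then repeat the testing one and two spatial ``derivatives'' higher: test with $-\Delta u_t$-type and $\Delta^2 u_t$-type multipliers (admissible since the $\varphi_j$ are $\Delta$-eigenfunctions), producing the $\|\Delta u\|_{H^1(L^2(\Omega))}$ and $\|\nabla\Delta u\|_{H^1(L^2(\Omega))}$ bounds, with the new commutator/coefficient terms absorbed via $\radius_{me}$, $\radius_{hi}$ in \eqref{smallnesscoeffs_me}, \eqref{smallnesscoeffs_hi} and, for the $\rtil^\nabla$ versus $\rtil^t$ split in $Z_{me}$, a time-integration by parts chosen so that derivatives fall on the favorable factor (yielding the $\rtil^\nabla_t$ and $\nabla\rtil^t$ terms of \eqref{enest_me}). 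Elliptic regularity \eqref{Cells}--\eqref{Cell0}, applied pointwise in $t$, then converts these bounds into the target spaces $H^2_\Delta(\Omega)$, $H^3_\Delta(\Omega)$ of \eqref{Ume}, \eqref{Uhi}; the $L^\infty(\Omega)$-component of \eqref{Ume}, needed because in $d=3$, $H^2$ does not embed into $L^\infty$ and the mixed conditions obstruct full $H^2$-regularity, I would obtain by a De~Giorgi/Stampacchia truncation argument on the elliptic problem solved by $u_t$ at a.e.\ $t$, as in the references cited in the introduction. Passing $n\to\infty$ by weak-$*$ compactness and lower semicontinuity of the $\mathcal{E}$'s yields a solution at each level, and \eqref{periodic} survives because it holds for every approximant.

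Uniqueness at any level follows by applying part (i) to the difference $w$ of two solutions, which solves the homogeneous problem, so $\mathcal{E}_{lo}(w)=0$ and hence $w=0$ since $\|w\|_{H^1(H^1(\Omega))}^2\le\mathcal{E}_{lo}(w)$. The closing regularity upgrades are then routine once the right-hand sides are under control: when $\Omega$ is $C^{1,1/2}$ with $\Gamma_D=\emptyset$, global elliptic regularity for $-c^2\Delta u(t)=(\text{$L^2$-data})$ with a pure Robin/Neumann condition, uniformly in $t$, gives $u\in H^1(0,T;H^{3/2}(\Omega))$; and when $\Omega$ is $C^{2,1}$ with $\partial\Omega$ of a single type, the analogous global $H^3$ elliptic estimate with $H^1$-data gives $u\in H^1(0,T;H^3(\Omega))$.
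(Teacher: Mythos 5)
Your overall architecture coincides with the paper's: a periodic space--time Galerkin scheme whose solvability on each finite-dimensional level follows from the homogeneous a priori bound (Fredholm alternative), energy identities obtained by testing with a combination of $u$, $u_t$, $u_{tt}$ in which all $\int_0^T\frac{d}{dt}(\cdots)\,dt$ terms drop out, absorption of the coefficient-variation terms via \eqref{smallnesscoeffs_lo}--\eqref{smallnesscoeffs_hi}, recovery of $\tau u_{ttt}$ directly from the equation, Stampacchia/De Giorgi for the $L^\infty(\Omega)$ component of \eqref{Ume}, elliptic regularity for the closing $H^{3/2}$ and $H^3$ upgrades, weak limits with lower semicontinuity, and uniqueness from the low-level bound. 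Your treatment of the non-dissipative part of $\mathcal{E}_{lo}$ (splitting $u$ into time mean and zero-mean fluctuation, Poincar\'e in time for the latter, the time-averaged elliptic equation plus \eqref{PF} for the former) is a workable alternative to the paper's device of including a multiple $\rho u$ in the test function, which produces the $\rho c^2|\nabla u|^2$ and $\rho\gamma c^2u^2$ terms directly; both address the same obstruction that periodicity is blind to constant-in-time offsets.

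There is, however, one concrete gap: the choice of spatial basis. You take eigenfunctions of $-\Delta$ under the \emph{static} condition $\partial_\nu\varphi+\gamma\varphi=0$ and propose to carry the $\beta u_t$ term ``in the weak form''. That suffices for part (i), where test functions need only lie in $V_{lo}$. But parts (ii) and (iii) rest on testing \eqref{ve_lo} with $-\Delta v$ and $(-\Delta)^2v$, and the derivation of \eqref{enid_me}, \eqref{enid_hi} --- as well as the estimate \eqref{est_ru_me} of the $\rtil^t$ contribution --- requires the discrete functions to satisfy the full dynamic condition $\partial_\nu v+\beta v_t+\gamma v=0$ \emph{pointwise}: this is what places $-\Delta v$, $(-\Delta)^2 v$ in the admissible classes $V_{me}$, $V_{hi}$ and, after the spatial integrations by parts, makes the $\Gamma_a$ boundary contributions cancel or combine into the sign-definite dissipative terms $\tau\taubar\beta u_{ttt}^2$, $\taubar\beta b(\Delta u_{tt})^2$. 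With frequency-independent eigenfunctions the discrete solution satisfies only a Galerkin-projected version of the absorbing condition whenever $\beta\neq0$, the cancellation fails, and the medium/high estimates do not close on configurations with $\Gamma_a\neq\emptyset$ --- which the theorem allows (the energies \eqref{energies_mainresults} explicitly contain $\Gamma_a$ terms). The paper's remedy is to couple the spatial eigenproblem to the temporal frequency, $\partial_\nu\phi_{m\,j}+\imath m\omega\beta\phi_{m\,j}+\gamma\phi_{m\,j}=0$, so that each ansatz function $\Re\bigl(e^{\imath m\omega t}\hat v_m(x)\bigr)$ satisfies $\partial_\nu v+\beta v_t+\gamma v=0$ exactly and the equivalence \eqref{equivalence} of the three Galerkin formulations holds; you should either adopt this frequency-dependent basis or restrict your construction to $\Gamma_a=\emptyset$.
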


\begin{remark}\label{rem:compare_init}
An inspection of the proof of Theorem~\ref{thm:JMGT-linearized} shows that with initial conditions $u(0)=u_0$, $u_t(0)=u_1$, $\tau u_{tt}(0)=\tau u_2$ in place of periodicity \eqref{periodic}, the energies in \eqref{enest_lo}, \eqref{enest_me}, \eqref{enest_hi} can be replaced by
\begin{equation}\label{energies_initialvalues}
\begin{aligned}
\tilde{\mathcal{E}}_{lo}(u)&=\mathcal{E}_{lo}(u)
+\taubar\tau\|u_{tt}\|_{L^\infty(L^2(\Omega))}^2
+\taubar\|\nabla u_t\|_{L^\infty(L^2(\Omega))}^2
+\|\nabla u\|_{L^\infty(L^2(\Omega))}^2
\\[1ex]
\tilde{\mathcal{E}}_{lo}(u)&=\mathcal{E}_{me}(u)
+\taubar\tau\|\nabla u_{tt}\|_{L^\infty(L^2(\Omega))}^2
+\taubar\|\Delta u_t\|_{L^\infty(L^2(\Omega))}^2
+\|\Delta u\|_{L^\infty(L^2(\Omega))}^2
\\[1ex]
\tilde{\mathcal{E}}_{lo}(u)&=\mathcal{E}_{hi}(u)
+\taubar\tau\|\Delta u_{tt}\|_{L^\infty(L^2(\Omega))}^2
+\taubar\|\nabla\Delta u_t\|_{L^\infty(L^2(\Omega))}^2
+\|\nabla\Delta u\|_{L^\infty(L^2(\Omega))}^2,
\end{aligned}
\end{equation}
while the right hand side in these energy estimates has to be augmented by the respective initial values of these terms, that is, 
by $\taubar\tau\|u_2\|_{L^\infty(L^2(\Omega))}^2
+\taubar\|\nabla u_1\|_{L^\infty(L^2(\Omega))}^2
+\|\nabla u_0\|_{L^\infty(L^2(\Omega))}^2$
by $\taubar\tau\|\nabla u_2\|_{L^\infty(L^2(\Omega))}^2
+\taubar\|\Delta u_1\|_{L^\infty(L^2(\Omega))}^2
+\|\Delta u_0\|_{L^\infty(L^2(\Omega))}^2$
and by $\taubar\tau\|\Delta u_2\|_{L^\infty(L^2(\Omega))}^2$ \\
$+\taubar\|\nabla\Delta u_1\|_{L^\infty(L^2(\Omega))}^2
+\|\nabla\Delta u_0\|_{L^\infty(L^2(\Omega))}^2$
in the low, medium, and high regularity setting, respectively.
This is in line with existing results on the initial value problem for \eqref{JMGT-Westervelt}, \eqref{JMGT-Kuznetsov}, \eqref{JMGT-linearized}, see, e.g., 
\cite{DellOroPata,KLM12_MooreGibson,MarchandMcDevittTriggiani12} for the linear case 
and \cite[Theorem 1.4]{KLP12_JordanMooreGibson}, \cite[Theorems 2.2, 2.3]{JMGT_Neumann}, for the nonlinear cases.
It shows the loss of regularity in the periodic setting and in particular the loss of direct $L^\infty$ in time energy estimates.

Hence an $L^\infty$ bound in time (as needed for controlling stability according to \eqref{stabilitycond}) has to be extracted from $H^1$ in time estimates and Sobolev embeddings.
However, also here we are faced with the fact that arbitrarily large constant offsets are invisible to \eqref{periodic}, so that $\|u\|_{H^1(0,T;X)}\lesssim \|u_t\|_{L^2(0,T;X)}$ fails to hold under periodic boundary conditions, whereas $\|u\|_{H^1(0,T;X)}\lesssim \|u(0)\|_X+\|u_t\|_{L^2(0,T;X)}$ can be well used under  initial conditions. 
\end{remark}

\medskip

\noindent
Secondly, we prove nonlinear well-posedness (see Section~\ref{sec:nonlinearwellposedness}).\\
For the JMGT-Westervelt case \eqref{JMGT-Westervelt} the result can be obtained on a medium regularity level and reads as follows.
\begin{theorem}\label{thm:JMGT-Westervelt} 
Let $T>0$ and $\Omega$ be a $C^{1,1/2}$ domain 
and assume that $\taubar,\,b,\,c$ satisfy \eqref{stabilitycond} with $\alpha=1$, \eqref{ass_coeff} and $\eta\in L^\infty(\Omega)\cap W^{1,6}(\Omega)$.

There exist $\radius_{lo}>0$, $\radius_{JW}>0$, $C_{JW}>0$ such that for any $\tau\in[0,\taubar]$, $b$, $c$, satisfying \eqref{smallnesscoeffs_lo} with $\alpha=1$, and $\|f\|_{Z_{me}}\leq \radius_{JW}$,
the time periodic system
\eqref{JMGT-Westervelt}, 
\eqref{periodic},
\eqref{bndy}
with $\Gamma_D=\emptyset$
has a unique solution  
$u\in U_{me}\cap H^1(0,T;H^{3/2}(\Omega))$
and this solution satisfies the estimate
\begin{equation}\label{enest_W}
\bar{\mathcal{E}}_{me}(u)\leq C_{JW}(
\taubar^2\|f_t\|_{L^2(L^2(\Omega))}^2
+\|f\|_{L^2(L^2(\Omega))}^2
).
\end{equation}
If $\Gamma_a=\emptyset$ then assumption \eqref{smallnesscoeffs_lo} can be skipped and \eqref{enest_W} holds with $\bar{\mathcal{E}}_{me}$ replaced by $\mathcal{E}_{me} $.
\end{theorem}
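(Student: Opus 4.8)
The plan is to run a contraction argument over the linear theory of Theorem~\ref{thm:JMGT-linearized}(ii), exploiting the quasilinear structure $\alpha=1+2\eta u$, $\rtil=2\eta u_t^2+f$ from \eqref{alphar}. Fix admissible smallness radii $\radius_{lo},\radius_{me}$ and the attached constants $C_{lo},C_{me}$ from Theorem~\ref{thm:JMGT-linearized}, and for $R>0$ to be chosen set
\[
B_R=\bigl\{v\in U_{me}\ :\ v\ \text{satisfies}\ \eqref{periodic}\ \text{and}\ \eqref{bndy},\ \bar{\mathcal{E}}_{me}(v)\le R^2\bigr\}.
\]
Define $\mathcal{T}:B_R\to U_{me}$, $\mathcal{T}v=u$, where $u$ is the solution provided by Theorem~\ref{thm:JMGT-linearized}(ii) of the linear periodic problem \eqref{JMGT-linearized},\eqref{periodic},\eqref{bndy} with coefficient $\alpha=1+2\eta v$ and right-hand side $\rtil=2\eta v_t^2+f$, decomposed as $\rtil^\nabla=2\eta v_t^2+f^\nabla$, $\rtil^t=f^t$ along a near-optimal splitting $f=f^\nabla+f^t$ in $Z_{me}$. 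A fixed point of $\mathcal{T}$ solves \eqref{JMGT-Westervelt},\eqref{periodic},\eqref{bndy}, and the extra-regularity clause of Theorem~\ref{thm:JMGT-linearized}(ii), available because $\Omega$ is $C^{1,1/2}$ and $\Gamma_D=\emptyset$, then places it in $H^1(0,T;H^{3/2}(\Omega))$.

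The first task is to check that $\mathcal{T}$ is well defined for $R$ small, with constants uniform on $B_R$. For $v\in B_R$, combining $H^1(0,T;X)\hookrightarrow C([0,T];X)$ with the components $H^2(0,T;H^1(\Omega))$ and $H^1(0,T;H^2_\Delta(\Omega))$ of $U_{me}$, the elliptic estimate \eqref{Cells} with $s=1/2$ on the $C^{1,1/2}$ domain (using \eqref{bndy} and the trace theorem to bound $\partial_\nu v$ in $L^2(\partial\Omega)$), and \eqref{PF}, one obtains $v\in C([0,T];L^\infty(\Omega))$, $v_t\in C([0,T];H^1(\Omega))\cap L^2(0,T;L^\infty(\Omega))$, $v_{tt}\in C([0,T];L^2(\Omega))$ and $\nabla v\in C([0,T];H^{1/2}(\Omega))$, all bounded by $C(\taubar)\bar{\mathcal{E}}_{me}(v)^{1/2}\le C(\taubar)R$. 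Since $\eta\in L^\infty(\Omega)\cap W^{1,6}(\Omega)$ and, for the minimal admissible exponents in \eqref{qstarphat} in dimension $d\le3$, $H^1(\Omega)\hookrightarrow L^{q^*}(\Omega)$ and $H^{1/2}(\Omega)\hookrightarrow L^{\hat p}(\Omega)$, this gives $\|2\eta v\|_{L^\infty(L^\infty(\Omega))}$, $\|2\eta v_t\|_{L^\infty(L^{q^*}(\Omega))}$ and $\|\nabla(2\eta v)\|_{L^\infty(L^{\hat p}(\Omega))}$ bounded by $C(\eta,\taubar)R$. Hence for $R\le R_0=R_0(\eta,b,c,\taubar,\radius_{lo},\radius_{me})$ the coefficient $\alpha=1+2\eta v$ is periodic (because $v$ is), bounded with bounded inverse, preserves \eqref{stabilitycond} (which holds with room to spare for $\alpha=1$), and satisfies \eqref{smallnesscoeffs_lo} and \eqref{smallnesscoeffs_me} together with the given $b,c$; and $\rtil\in Z_{me}$ since $2\eta v_t^2\in L^2(0,T;L^2(\Omega))$ (from $v_t\in L^4(0,T;L^4(\Omega))$) with $\partial_t(2\eta v_t^2)=4\eta v_t v_{tt}\in L^2(0,T;L^2(\Omega))$ (from $v_t\in L^2(0,T;L^\infty(\Omega))$, $v_{tt}\in C([0,T];L^2(\Omega))$). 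Thus $\mathcal{T}$ is well defined with constants uniform over $B_R$; this verification, rather than the contraction below, is the crux of the argument, because the stability condition \eqref{stabilitycond} requires an $L^\infty$-in-space-and-time bound on $\eta v$ which, by Remark~\ref{rem:compare_init}, is not available from energy identities and must be distilled from the $H^1$-in-time medium-regularity energy through the Sobolev embeddings the $C^{1,1/2}$, $\Gamma_D=\emptyset$ hypotheses provide, while \eqref{smallnesscoeffs_lo}--\eqref{smallnesscoeffs_me} on $\alpha_t$ and $\nabla\alpha$ are exactly what forces the assumption $\eta\in W^{1,6}(\Omega)$ and demands careful bookkeeping of which mixed space-time Lebesgue norms of $v_t$ and $\nabla v$ the ball $B_R$ controls.

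For self-mapping, insert $\rtil$ into \eqref{enest_me} together with \eqref{enest_lo}. Each nonlinear contribution is quadratic in $v$, hence of order $R^4$ after squaring: e.g. $\|2\eta v_t^2\|_{L^2(L^2(\Omega))}\lesssim\|\eta\|_{L^\infty}\|v_t\|_{L^4(L^4(\Omega))}^2$ and $\|\partial_t(2\eta v_t^2)\|_{L^2(L^2(\Omega))}\lesssim\|\eta\|_{L^\infty}\|v_t\|_{L^2(L^\infty(\Omega))}\|v_{tt}\|_{L^\infty(L^2(\Omega))}$, both $\lesssim\|\eta\|_{L^\infty}R^2$; the $f$-terms ($\|f^\nabla_t\|_{L^2(L^2(\Omega))}$, $\|\nabla f^t\|_{L^2(L^2(\Omega))}$, $\|f^t\|_{L^2(L^2(\Gamma))}$, $\|f\|_{L^2(L^2(\Omega))}$, $\|f\|_{L^2(H^1(\Omega)^*)}$) are all $\lesssim\|f\|_{Z_{me}}\le\radius_{JW}$. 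This yields $\bar{\mathcal{E}}_{me}(\mathcal{T}v)\le C_\sharp(\eta,\taubar)R^4+C_\flat(\taubar)\radius_{JW}^2$; choosing $R\le\min\{R_0,(2C_\sharp)^{-1/2}\}$ (and also small enough for the contraction) and then $\radius_{JW}\le R(2C_\flat)^{-1/2}$ forces $\bar{\mathcal{E}}_{me}(\mathcal{T}v)\le R^2$. For the contraction, given $v_1,v_2\in B_R$ with $u_i=\mathcal{T}v_i$, the difference $w=u_1-u_2$ solves \eqref{JMGT-linearized},\eqref{periodic},\eqref{bndy} with coefficient $\alpha_1=1+2\eta v_1$ (still obeying \eqref{stabilitycond},\eqref{smallnesscoeffs_lo}) and source
\[
\rtil_w=2\eta(v_1-v_2)\,u_{2,tt}+2\eta(v_{1,t}+v_{2,t})(v_{1,t}-v_{2,t}).
\]
Applying Theorem~\ref{thm:JMGT-linearized}(i) and using the $B_R$-bounds $\|u_{2,tt}\|_{L^2(H^1(\Omega))},\|v_{i,t}\|_{L^\infty(H^1(\Omega))}\lesssim C(\taubar)R$ and $\|v_1-v_2\|_{L^\infty(H^1(\Omega))}+\|(v_1-v_2)_t\|_{L^2(H^1(\Omega))}\lesssim\mathcal{E}_{lo}(v_1-v_2)^{1/2}$, Hölder in space-time gives $\|\rtil_w\|_{L^2(L^2(\Omega))}+\|\rtil_w\|_{L^2(H^1(\Omega)^*)}\lesssim\|\eta\|_{L^\infty}C(\taubar)R\,\mathcal{E}_{lo}(v_1-v_2)^{1/2}$, hence $\mathcal{E}_{lo}(\mathcal{T}v_1-\mathcal{T}v_2)\le C_{lo}C(\eta,\taubar)R^2\,\mathcal{E}_{lo}(v_1-v_2)$, a strict contraction in the metric $\mathcal{E}_{lo}(\cdot-\cdot)^{1/2}$ once $R$ is small.

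To conclude, $B_R$ is convex, $\bar{\mathcal{E}}_{me}$-bounded and $\bar{\mathcal{E}}_{me}$-closed, and \eqref{periodic},\eqref{bndy} pass to weak limits; a standard argument (weak compactness of $B_R$, weak lower semicontinuity of $\bar{\mathcal{E}}_{me}$, continuity of the embedding into the $\mathcal{E}_{lo}$-space) then shows $B_R$ is complete for the weaker metric $\mathcal{E}_{lo}(\cdot-\cdot)^{1/2}$. Banach's fixed point theorem yields a unique $u\in B_R$ solving \eqref{JMGT-Westervelt},\eqref{periodic},\eqref{bndy}; estimate \eqref{enest_W} is the self-mapping bound, and $u\in U_{me}\cap H^1(0,T;H^{3/2}(\Omega))$ as noted. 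Uniqueness within $U_{me}\cap H^1(0,T;H^{3/2}(\Omega))$ follows from the same difference estimate, once any such solution is shown, via the a priori bound \eqref{enest_W} and the smallness of $\radius_{JW}$, to lie in $B_R$. Finally, when $\Gamma_a=\emptyset$ the $\Gamma_a$-terms in $\mathcal{E}_{lo}$ are absent and, as the proof of Theorem~\ref{thm:JMGT-linearized} shows, the medium-level estimate closes without passing through the low-level one, so \eqref{smallnesscoeffs_lo} can be dropped and the whole argument runs with $\mathcal{E}_{me}$ in place of $\bar{\mathcal{E}}_{me}$.
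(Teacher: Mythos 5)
Your overall architecture (Banach fixed point over the linear Theorem~\ref{thm:JMGT-linearized}(ii), self-mapping in the $\bar{\mathcal{E}}_{me}$-ball, contraction in the weaker $\mathcal{E}_{lo}$-metric) is the right one, but two of your key estimates rest on norms that the medium-level energy does not control, and these are precisely the difficulties the paper's proof is built to avoid. First, you obtain the indispensable $L^\infty$-in-space bound on $v$ (needed for nondegeneracy of $\alpha=1+2\eta v$ and \eqref{stabilitycond}) from $v\in H^1(0,T;H^{3/2}(\Omega))$ and the embedding $H^{3/2}(\Omega)\hookrightarrow L^\infty(\Omega)$; for $d=3$ this is the critical exponent $s=d/2$ and the embedding fails, and the mixed boundary conditions (with $\Gamma_a\neq\emptyset$) preclude upgrading to $H^{3/2+\varepsilon}$. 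The paper instead derives $\|u\|_{H^1(L^\infty(\Omega))}^2\lesssim(1+\tfrac1{\taubar})\bar{\mathcal{E}}_{me}(u)$ by Stampacchia's method, using only $-\Delta u(t)\in L^2(\Omega)$ with $2>d/2$ and the boundary traces; this is listed as one of the main methodological points and your argument has no substitute for it. Second, your self-mapping estimate uses $\|v_{tt}\|_{L^\infty(L^2(\Omega))}$ (and the claim $v_{tt}\in C([0,T];L^2(\Omega))$ with bound $C(\taubar)\bar{\mathcal{E}}_{me}(v)^{1/2}$). In the periodic setting this would require $v_{ttt}\in L^2(L^2(\Omega))$, which enters $\bar{\mathcal{E}}_{me}$ only with the weight $\taubar\tau^2$; the bound degenerates as $\tau\to0$ and gives nothing at $\tau=0$, a case the theorem explicitly covers. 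This is exactly the loss of $L^\infty$-in-time control described in Remark~\ref{rem:compare_init}. (The companion factor $\|v_t\|_{L^2(L^\infty(\Omega))}$ is likewise unavailable for $d=3$.) The product $\|4\eta v_tv_{tt}\|_{L^2(L^2(\Omega))}$ can be salvaged by redistributing exponents, e.g.\ $\|v_t\|_{L^\infty(L^6(\Omega))}\|v_{tt}\|_{L^2(L^3(\Omega))}$ with $\taubar$-dependent constants, but as written the step fails.

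Beyond these gaps, your route also diverges structurally from the paper's in a way worth noting. You take $\alpha=1+2\eta v$ and $\rtil=2\eta v_t^2+f$ with the quadratic term in the $\rtil^\nabla$ slot of \eqref{Zme} (hence needing a time derivative of it); the paper explicitly lists this option and then rejects it in favour of $\alpha=1$ with the \emph{entire} nonlinearity $\eta((u^-)^2)_{tt}$ placed in the $\rtil^t$ slot, so that only the spatial gradient $\|\nabla r[v,w]\|_{L^2(L^2(\Omega))}$ of the bilinear form $r[v,w]=\eta(vw_{tt}+2v_tw_t+v_{tt}w)$ must be bounded (which is where $\eta\in W^{1,6}(\Omega)$ and the $H^1(0,T;H^{3/2}(\Omega))$, $H^1(0,T;L^\infty(\Omega))$ full-norm estimates enter). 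This choice avoids verifying \eqref{smallnesscoeffs_lo}--\eqref{smallnesscoeffs_me} for a state-dependent coefficient and, more importantly, avoids ever differentiating the nonlinearity in time, which is what forces your problematic $\|v_{tt}\|_{L^\infty(L^2(\Omega))}$ term. If you wish to keep your variable-$\alpha$ formulation, you must at minimum replace the $H^{3/2}\hookrightarrow L^\infty$ step by the Stampacchia bound and rework the $\rtil^\nabla_t$ estimate using only the $L^2$-in-time quantities present in $\bar{\mathcal{E}}_{me}$.
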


\begin{remark}\label{rem:regcoeff}
Concerning the required regularity of coefficients we point to the fact that in case $\Gamma_a=\emptyset$, we are able to show well-posedness of \eqref{JMGT-Westervelt} 
\skipKuznetsov{and of \eqref{Kuznetsov}} 
with just $b,\,c\,\in L^\infty(\Omega)$, $\eta\,\in L^\infty(\Omega)\cap W^{1,6}(\Omega)$, thus allowing for jumps in the sound speed and attenualtion coefficient. 
\end{remark}

For the JMGT-Kuznetsov case \eqref{JMGT-Kuznetsov}, including gradient nonlinearity and thus requiring higher regularity, we have
\begin{theorem}\label{thm:JMGT-Kuznetsov}
Let $T>0$ and $\Omega$ be a $C^{2,1}$ domain
and assume that $\taubar,\,b,\,c$ satisfy \eqref{stabilitycond} with $\alpha=1$, \eqref{ass_coeff} and $\eta\in L^\infty(\Omega)\cap W^{1,6}(\Omega)$, $\Delta\eta\in L^2(\Omega)$.

There exist $\radius_{lo}>0$, $\radius_{JK}>0$, $C_{JK}>0$ such that for any $\tau\in[0,\taubar]$, $b$, $c$, satisfying \eqref{smallnesscoeffs_lo}, \eqref{smallnesscoeffs_hi} with $\alpha=1$, and $\|f\|_{Z_{hi}}\leq \radius_{JK}$,
the time periodic system
\eqref{JMGT-Kuznetsov}, 
\eqref{periodic},
\eqref{bndy}
with $\partial\Omega=\Gamma_i$ or $\partial\Omega=\Gamma_N$ or $\partial\Omega=\Gamma_D$
has a unique solution  
$u\in U_{hi}\cap H^1(0,T;H^3(\Omega))$
and this solution satisfies the estimate
\[
\bar{\mathcal{E}}_{hi}(u)\leq C_{JK}(
\taubar^2 \|f_t\|_{L^2(L^2(\Omega))}^2
+\taubar \|\Delta f\|_{L^2(L^2(\Omega))}^2
+\|\Delta f\|_{L^2(H^1(\Omega)^*)}^2
+\|f\|_{L^2(L^2(\Omega))}^2
).
\]
\end{theorem}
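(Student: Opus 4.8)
\textbf{Proof proposal for Theorem~\ref{thm:JMGT-Kuznetsov}.}
The plan is to run a fixed-point argument on the map $\Phi: v\mapsto u$, where $u$ solves the linear MGT equation \eqref{JMGT-linearized} with the coefficient and source frozen at $v$, namely $\alpha=1+2\tilde{\eta}v_t$ and $\rtil=2\nabla v\cdot\nabla v_t+f$, together with \eqref{periodic} and \eqref{bndy} on the pure-boundary configurations $\partial\Omega\in\{\Gamma_i,\Gamma_N,\Gamma_D\}$. The natural ball on which to iterate is
\[
\mathcal{B}_\rho=\{v\in U_{hi}\cap H^1(0,T;H^3(\Omega))\ :\ \bar{\mathcal{E}}_{hi}(v)+\|v\|_{H^1(H^3(\Omega))}^2\leq\rho^2\},
\]
with $\rho$ chosen small (proportional to the right-hand side norm of $f$). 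First I would verify that for $v\in\mathcal{B}_\rho$ with $\rho$ small enough the frozen coefficient $\alpha=1+2\tilde\eta v_t$ obeys the nondegeneracy/stability requirement \eqref{stabilitycond} and the smoothness bounds \eqref{smallnesscoeffs_me}, \eqref{smallnesscoeffs_hi}: here one uses $v_t\in H^1(0,T;H^2_\Delta(\Omega))\hookrightarrow C([0,T];H^2_\Delta(\Omega))\hookrightarrow C([0,T];L^\infty(\Omega))$ in $d\le3$ to control $\|\tilde\eta v_t\|_{L^\infty(L^\infty)}$, and $\nabla\alpha=2\tilde\eta\nabla v_t+2v_t\nabla\tilde\eta$, $\Delta\alpha=2\tilde\eta\Delta v_t+4\nabla\tilde\eta\cdot\nabla v_t+2v_t\Delta\tilde\eta$, estimated via the assumed $\eta\in L^\infty\cap W^{1,6}$, $\Delta\eta\in L^2$ and the embeddings collected after \eqref{qstarphat}; since $\rho$ is small this makes $\radius_{me},\radius_{hi}$ as small as required and also secures $\alpha(T)=\alpha(0)$ from $v_t(T)=v_t(0)$.

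Next I would check that the frozen source lies in $Z_{hi}=L^2(0,T;H^2_\Delta(\Omega))$ with a quantitative bound. One writes $\rtil=2\nabla v\cdot\nabla v_t+f$; then $\Delta(\nabla v\cdot\nabla v_t)=\nabla\Delta v\cdot\nabla v_t+\nabla v\cdot\nabla\Delta v_t+2\,D^2v:D^2v_t$, and each term is estimated in $L^2(L^2(\Omega))$ using $v\in H^1(H^3(\Omega))$, $v_t\in L^2(H^3(\Omega))$, together with the Hölder/Sobolev embeddings $H^1(\Omega)\hookrightarrow L^p(\Omega)$ and $H^2(\Omega)\hookrightarrow L^\infty(\Omega)$ in $d\le3$; one also needs $\nabla v\cdot\nabla v_t\in H^1(\Omega)$ (for the $H^2_\Delta$ trace side of \eqref{Cell2}/the boundary terms), which follows from the same product estimates. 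This yields $\|\rtil\|_{Z_{hi}}\lesssim \|v\|_{H^1(H^3)}\|v\|_{U_{hi}}+\|f\|_{Z_{hi}}\lesssim\rho^2+\|f\|_{Z_{hi}}$. Applying Theorem~\ref{thm:JMGT-linearized}(iii) (in its $\partial\Omega=\Gamma_i/\Gamma_N/\Gamma_D$ variant, which also gives $H^1(H^3(\Omega))$ regularity) then produces $u=\Phi(v)\in U_{hi}\cap H^1(H^3(\Omega))$ with $\bar{\mathcal{E}}_{hi}(u)+\|u\|_{H^1(H^3)}^2\lesssim(\rho^2+\|f\|_{Z_{hi}})^2+(\text{lower-order }\mathcal{E} \text{ terms})$; bootstrapping through parts (i)--(ii) of the theorem absorbs the $\mathcal{E}_{lo},\mathcal{E}_{me}$ contributions, and choosing $\|f\|_{Z_{hi}}$ small and then $\rho\sim\|f\|_{Z_{hi}}$ makes the right-hand side $\le\rho^2$, so $\Phi$ maps $\mathcal{B}_\rho$ into itself.

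For uniqueness and to close the fixed point I would show $\Phi$ is a contraction in a \emph{weaker} norm — the low-level energy $\mathcal{E}_{lo}$ (or $\bar{\mathcal{E}}_{me}$) — on $\mathcal{B}_\rho$, which is the standard device since $\Phi$ cannot be expected to contract in the top norm. Given $v^1,v^2\in\mathcal{B}_\rho$, the difference $w=\Phi(v^1)-\Phi(v^2)$ solves a linear MGT equation with coefficient $\alpha^1=1+2\tilde\eta v^1_t$ and right-hand side consisting of the source difference $2\nabla(v^1-v^2)\cdot\nabla v^1_t+2\nabla v^2\cdot\nabla(v^1_t-v^2_t)$ plus the commutator term $2\tilde\eta(v^2_t-v^1_t)\,\Phi(v^2)_{tt}$ coming from freezing $\alpha$ at different arguments; using the uniform $U_{hi}$-bound $\rho$ on $v^1,v^2,\Phi(v^2)$ one estimates all of these in the $Z_{lo}$-type norm by $C\rho\,\|v^1-v^2\|_{(\text{weak norm})}$, and Theorem~\ref{thm:JMGT-linearized}(i) gives $\mathcal{E}_{lo}(w)\le C\rho^2\,\|v^1-v^2\|^2$, a genuine contraction for $\rho$ small. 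A short interpolation/closedness argument (the weak-norm limit lies in $\mathcal{B}_\rho$ by weak-$*$ compactness and lower semicontinuity of $\bar{\mathcal{E}}_{hi}$) upgrades the fixed point to the asserted regularity, and the final estimate is just \eqref{enest_hi} with $\rtil=2\nabla u\cdot\nabla u_t+f$ after absorbing the (now controlled) quadratic terms into the left-hand side using smallness of $\rho$. I expect the main obstacle to be the second step: verifying $2\nabla v\cdot\nabla v_t+f\in Z_{hi}$ with the \emph{right} structural split and boundary behaviour — in particular that the gradient-product term has the $H^2_\Delta$ regularity and, where needed, vanishing or controlled Dirichlet/Neumann trace demanded by $Z_{hi}$ and by the pure-boundary hypotheses — since this is exactly where the gradient nonlinearity of Kuznetsov (as opposed to Westervelt) forces the high-regularity level and consumes the full $H^1(H^3)\cap U_{hi}$ smoothness of the iterate.
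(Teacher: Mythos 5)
Your proposal is correct in outline and would prove the theorem, but it differs from the paper's argument in two structural choices. First, the paper does \emph{not} freeze the coefficient of $u_{tt}$ at $\alpha=1+2\tilde\eta v_t$: it keeps $\alpha\equiv 1$ throughout and places the \emph{entire} nonlinearity $\bigl(\tilde\eta (u^-_t)^2+|\nabla u^-|^2\bigr)_t$ into the source $\rtil$, so that each application of Theorem~\ref{thm:JMGT-linearized}(iii) uses a constant $\alpha$ and the smallness conditions on $\alpha_t$, $\nabla\alpha$, $\Delta\alpha$ in \eqref{smallnesscoeffs_lo}, \eqref{smallnesscoeffs_me}, \eqref{smallnesscoeffs_hi} become vacuous; the price is a longer list of product estimates, namely bounding $\Delta r[v,w]$ for the full bilinear form \eqref{r_K} (including the $\tilde\eta(v_t w_{tt}+v_{tt}w_t)$ part) in $L^2(L^2(\Omega))$. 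Your route instead pushes that work into verifying \eqref{ass_coeff}, \eqref{stabilitycond} and the $\alpha$-smallness conditions for the frozen, time-dependent coefficient — comparable effort, and a variant the paper itself lists (for Westervelt) before discarding it. Second, because $\alpha\equiv1$ the difference $\hat u=\mathcal{T}(u_1^-)-\mathcal{T}(u_2^-)$ in the paper solves the \emph{same} constant-coefficient linear equation with source $\bigl(\tilde\eta(u_{1t}^-+u_{2t}^-)\hat u_t^-+\nabla(u_1^-+u_2^-)\cdot\nabla\hat u^-\bigr)_t$ and no commutator term, so Theorem~\ref{thm:JMGT-linearized}(iii) gives contractivity directly in the \emph{full} norm $U_{hi}$, $\|\hat u\|_{U}\le 2C\radius\|\hat u^-\|_{U}$; your weak-norm contraction plus the weak-closedness/lower-semicontinuity upgrade is the more robust device and is forced on you by the commutator $2\tilde\eta\,\hat v_t\,\Phi(v^2)_{tt}$, but it is avoidable here. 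Your gradient-term estimates for $\Delta(\nabla v\cdot\nabla v_t)$ coincide with the paper's, as do the remaining ingredients: smallness of $\|f\|_{Z_{hi}}$, restriction to the pure boundary configurations so that the $H^1(0,T;H^3(\Omega))$ full-norm estimate of Theorem~\ref{thm:JMGT-linearized}(iii) is available (needed for $\|\nabla v\|_{L^\infty(L^\infty(\Omega))}$ and $\|\nabla\Delta v_t\|_{L^2(L^2(\Omega))}$), and the chaining of \eqref{enest_lo}, \eqref{enest_me}, \eqref{enest_hi} that produces the stated bound with the $f_t$ and $\Delta f$ terms. One ordering caveat on your version: the $\mathcal{E}_{lo}$-contraction estimate consumes $L^\infty$-in-space-and-time control of $\nabla v^2$ and of $\Phi(v^2)_{tt}$, which only the high-level self-mapping bound provides, so the invariance of $\mathcal{B}_\rho$ must be established first — you have that order right.
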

Theorems
~\ref{thm:JMGT-Westervelt}, \ref{thm:JMGT-Kuznetsov} comprise the case $\tau=0$ and thus also imply existence and uniqueness of solutions to the time periodic Westervelt and Kuznetsov equations, where for the former we also refer to \cite{periodicWestervelt, periodicWest_2}, while the latter is new.

\skipKuznetsov{
As a by-product, we also find a medium regularity level result on Kuznetsov's equation \eqref{Kuznetsov}, that is, the second order in time equation that results from \eqref{JMGT-Kuznetsov} by formally setting $\tau=0$ (as opposed to the results below that are obtained by taking limits $\tau\searrow0$) in two space dimensions.
\begin{theorem}\label{thm:Kuznetsov}
Let $T>0$ and $\Omega$ be a 
$C^{2,1}$ 
domain 
and assume that $\taubar,\,b,\,c$ satisfy \eqref{stabilitycond} with $\alpha=1$, \eqref{ass_coeff} and $\eta\in L^\infty(\Omega)\cap W^{1,6}(\Omega)$.
\\
There exist $\radius_{lo}>0$, $\radius_{K}>0$, $C_{K}>0$ such that for any $b$, $c$, satisfying \eqref{smallnesscoeffs_lo} with $\alpha=1$, and $\|f\|_{Z_{me}}+***\leq \radius_{K}$,
the time periodic system
\eqref{Kuznetsov}, 
\eqref{periodic},
\eqref{bndy}
with $\partial\Omega=\Gamma_i$ or $\partial\Omega=\Gamma_N$ or $\partial\Omega=\Gamma_D$
has a unique solution  
$u\in U_{me}\cap H^1(0,T;H^{3/2}(\Omega))\cap L^2(0,T;H^3(\Omega))$
and this solution satisfies the estimate
$\mathcal{E}_{me}(u)+\|u\|_{L^2(0,T;H^3(\Omega))}\leq C_{K}(
\taubar^2\|f^\nabla_t\|_{L^2(L^2(\Omega))}^2
+\taubar\|\nabla f^t\|_{L^2(L^2(\Omega))}^2
+\|f^t\|_{L^2(L^2(\Gamma))}^2
+\|f\|_{L^2(L^2(\Omega))}^2
+***
)$.\\
If $\Gamma_a=\emptyset$ then assumption \eqref{smallnesscoeffs_lo} can be skipped.
\end{theorem}
}

\medskip

As a preparation for studying inverse problems in a follow-up paper, we also provide a result on differentiability of the source-to-state map $\mathcal{S}:f\mapsto u$ where $u$ solves \eqref{JMGT-Westervelt} or \eqref{JMGT-Kuznetsov} with \eqref{periodic} and \eqref{bndy}.
\begin{theorem}\label{thm:diff}
Under the conditions of Theorem~\ref{thm:JMGT-Westervelt} with $\Gamma_D=\emptyset$, the mapping $\mathcal{S}:\mathcal{B}^{Z_{me}}_{\radius_{JW}}\to U_{me}\cap H^1(0,T;H^{3/2}(\Omega))$, $f\mapsto u$ satisfying 
\eqref{JMGT-Westervelt},
\eqref{periodic},
\eqref{bndy}
is Fr\'{e}chet differentiable, with derivative $\uld{u}=\mathcal{S}'(f)\uld{f}$ defined by the solution to 
\[
\bigl(\uld{u}+2\eta\, u\, \uld{u}\bigr)_{tt}-c^2\Delta\uld{u}-b\Delta\uld{u}_t+\uld{f}=0,
\]
\eqref{periodic},
\eqref{bndy}.

Under the conditions of Theorem~\ref{thm:JMGT-Kuznetsov} with $\partial\Omega=\Gamma_i$ or $\partial\Omega=\Gamma_N$ or $\partial\Omega=\Gamma_D$, the mapping $\mathcal{S}:\mathcal{B}^{Z_{hi}}_{\radius_{JK}}\to U_{hi}\cap H^1(0,T;H^3(\Omega))$, $f\mapsto u$ satisfying 
\eqref{JMGT-Kuznetsov},
\eqref{periodic},
\eqref{bndy}
is Fr\'{e}chet differentiable, with derivative $\uld{u}=\mathcal{S}'(f)\uld{f}$ defined by the solution to 
\[
\bigl(\uld{u}_t+2\tilde{\eta}\, u_t\, \uld{u}_t+2\nabla u\cdot\nabla\uld{u}\bigr)_t-c^2\Delta\uld{u}-b\Delta\uld{u}_t+\uld{f}=0,
\]
\eqref{periodic},
\eqref{bndy}.
\end{theorem}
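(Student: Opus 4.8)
\textbf{Proof proposal for Theorem~\ref{thm:diff}.}

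The plan is to verify Fr\'echet differentiability of $\mathcal{S}$ by the standard linearization-and-remainder argument, using the already established linear well-posedness results (Theorem~\ref{thm:JMGT-linearized}) as the engine, much as in the analysis of the initial-value problem. Fix $f\in\mathcal{B}^{Z_{me}}_{\radius_{JW}}$ (resp.\ $\mathcal{B}^{Z_{hi}}_{\radius_{JK}}$), write $u=\mathcal{S}(f)$, and for $\uld{f}$ small let $\uld{u}$ denote the solution of the linearized system displayed in the statement. The first step is to check that this linearized problem is well-posed with a bound $\|\uld{u}\|_{U_{me}}\lesssim\|\uld{f}\|_{Z_{me}}$ (resp.\ high-level analogue): in the Westervelt case the linearized equation is exactly \eqref{JMGT-linearized} with $\tau=0$, $\alpha=1+2\eta u$, and $\rtil=\uld{f}$, and one has to verify that $\alpha=1+2\eta u$ satisfies the coefficient hypotheses \eqref{stabilitycond}, \eqref{ass_coeff}, \eqref{smallnesscoeffs_lo}, \eqref{smallnesscoeffs_me} — this follows from the smallness of $u$ guaranteed by \eqref{enest_W} together with $\eta\in L^\infty\cap W^{1,6}$ and the embeddings $H^1(0,T;H^{3/2}(\Omega))\hookrightarrow L^\infty(0,T;L^\infty(\Omega))$, $\Delta u\in H^1(L^2(\Omega))$; thus $\nabla\alpha=2(\nabla\eta\,u+\eta\nabla u)\in L^\infty(L^{\hat p}(\Omega))$ is as small as $\radius_{JW}$. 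In the Kuznetsov case the linearized operator carries the extra terms $2\tilde\eta u_t\,\uld u_t$ and $2\nabla u\cdot\nabla\uld u$; here one sets $\alpha=1+2\tilde\eta u_t$ (again a small perturbation of unity by the $U_{hi}$-bound on $u$) and absorbs $(2\nabla u\cdot\nabla\uld u)_t=2\nabla u_t\cdot\nabla\uld u+2\nabla u\cdot\nabla\uld u_t$ into a perturbed right-hand side $\rtil\in Z_{hi}$, then closes a fixed-point/continuity argument exactly as in the proof of Theorem~\ref{thm:JMGT-Kuznetsov}. This defines a bounded linear map $\mathcal{S}'(f):\uld f\mapsto\uld u$.

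The second step is the remainder estimate. Set $w:=\mathcal{S}(f+\uld f)-\mathcal{S}(f)-\uld u$ and subtract the three equations. For Westervelt, using $(v^2)_{tt}$ with $v=u+\delta u$ where $\delta u:=\mathcal{S}(f+\uld f)-u$, a direct expansion gives that $w$ solves a linear JMGT-type equation with coefficient $\alpha=1+2\eta(u+\delta u)$ and right-hand side consisting of quadratic terms in $\delta u$, schematically $\rtil_w = -2\eta\,\bigl((\delta u)^2\bigr)_{tt}-2\eta\,(\delta u\,\uld u)_{tt}+\dots$, i.e.\ products of $\delta u$ with $\delta u-\uld u=w$ and with itself; the $w$-proportional part is absorbed into the left-hand side coefficient structure, and the genuinely quadratic part is controlled by $\|\delta u\|_{U_{me}}^2\lesssim\|\uld f\|_{Z_{me}}^2$ using the Lipschitz continuity of $\mathcal{S}$ (which itself follows by differencing two copies of the nonlinear equation and invoking Theorem~\ref{thm:JMGT-linearized}, as in the proof of uniqueness in Theorem~\ref{thm:JMGT-Westervelt}). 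Applying the linear estimate \eqref{enest_me} to the equation for $w$ then yields $\|w\|_{U_{me}}\lesssim\|\uld f\|_{Z_{me}}^2=o(\|\uld f\|_{Z_{me}})$, which is precisely Fr\'echet differentiability. The Kuznetsov case is analogous, with the extra gradient-nonlinearity terms $|\nabla v|^2$ expanded so that the quadratic remainder involves $\nabla\delta u\cdot\nabla\delta u$ and $\nabla u_t\cdot\nabla w$ etc., estimated in the $Z_{hi}$-norm via the algebra properties of $H^1(0,T;H^3(\Omega))$ and the $C^{2,1}$-domain elliptic regularity; the linear estimate \eqref{enest_hi} then closes it.

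The main obstacle I expect is not the bookkeeping of quadratic terms but verifying at each stage that the state-dependent coefficient $\alpha=1+2\eta u$ (resp.\ $1+2\tilde\eta u_t$) and the perturbed forcing genuinely land in the function-space classes required by Theorem~\ref{thm:JMGT-linearized} — in particular the smallness conditions \eqref{smallnesscoeffs_me}, \eqref{smallnesscoeffs_hi} on $\nabla\alpha$, $\Delta\alpha$, and the $\Gamma_D$-trace condition on $\rtil^t$ in $Z_{me}$, which is why the theorem is stated with $\Gamma_D=\emptyset$ (resp.\ $\partial\Omega=\Gamma_i,\Gamma_N,\Gamma_D$) so that the relevant boundary-compatibility of the linearized data holds automatically. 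A secondary technical point is that the quadratic remainder for Westervelt contains second time derivatives $\bigl((\delta u)^2\bigr)_{tt}$, so one must check $\delta u\in H^2(0,T;\cdot)$ with enough spatial regularity for the product to lie in $L^2(0,T;L^2(\Omega))$; this is exactly what $\delta u\in U_{me}$ (with $H^2(0,T;H^1(\Omega))$ and $H^1(0,T;L^\infty(\Omega))$) provides, so no new estimate beyond those already in Theorem~\ref{thm:JMGT-Westervelt} is needed. Lipschitz continuity of $f\mapsto\mathcal{S}'(f)$ follows by one more differencing of the linearized equations for two nearby base points, again via \eqref{enest_me} / \eqref{enest_hi}.
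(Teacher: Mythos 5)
Your overall strategy --- define $\uld{u}$ via the linearized periodic problem, bound it by the linear theory of Theorem~\ref{thm:JMGT-linearized}, and show the remainder is $O(\|\uld{f}\|_{Z}^2)$ --- is the same as the paper's, which treats self-mapping, contraction, first variation, Taylor remainder and Lipschitz continuity of $\mathcal{S}'$ jointly as the tasks (sm), (ct), (df0), (df1), (df2). The one genuine difference is organizational: you carry the state dependence in the coefficient, $\alpha=1+2\eta u$ (resp.\ $1+2\tilde\eta u_t$), and must then verify the smallness conditions \eqref{smallnesscoeffs_lo}, \eqref{smallnesscoeffs_me} (resp.\ \eqref{smallnesscoeffs_hi}) for this $\alpha$, as you correctly anticipate. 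The paper instead fixes $\alpha=1$ throughout (explicitly crediting \cite{periodicWest_2} for this choice) and pushes every state-dependent term into the inhomogeneity, exploiting that all five tasks produce right-hand sides of one common bilinear form $r[v,w]=\eta(vw)_{tt}$ (resp.\ $(\tilde\eta v_t w_t+\nabla v\cdot\nabla w)_t$) for suitable pairs $v,w$; a single set of product estimates, \eqref{todos_me} and its high-level analogue, then serves all five purposes at once. Your route buys a shorter list of right-hand-side terms at the price of re-verifying the coefficient hypotheses for a non-constant $\alpha$ (including $\alpha_t\in L^\infty(L^{q^*}(\Omega))$ and $\alpha(T)=\alpha(0)$); the paper's buys a uniform treatment at the price of more product estimates. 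The paper in fact lists your variant as ``the choice directly corresponding to \eqref{alphar}'' before setting it aside.

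Two concrete points need repair. First, with $\alpha=1+2\eta u$ the linearized Westervelt equation is \emph{not} \eqref{JMGT-linearized} with $\rtil=\uld{f}$: expanding $(2\eta u\,\uld{u})_{tt}$ leaves the cross terms $4\eta u_t\uld{u}_t+2\eta u_{tt}\uld{u}$, which depend on the unknown $\uld{u}$ and must be placed in $\rtil$ and then absorbed using $\|u\|_{U_{me}}\leq\radius$ small --- this is exactly the absorption $\|\uld{u}\|_U\leq C(\radius\|\uld{u}\|_U+\|\uld{f}\|_Z)$ in the paper's (df0) line; without it, well-posedness and boundedness of $\uld{f}\mapsto\uld{u}$ are not established. (You do acknowledge the analogous absorption in the Kuznetsov case, but assert the Westervelt case is ``exactly'' linear with $\rtil=\uld f$, which it is not.) Second, under the conditions of Theorem~\ref{thm:JMGT-Westervelt} the relaxation time ranges over $\tau\in[0,\taubar]$, so the derivative equation must retain the $\tau\uld{u}_{ttt}$ term and the linear theory is to be applied uniformly in $\tau$; your parenthetical ``$\tau=0$'' should be dropped. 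Neither point changes the architecture of your argument.
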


\medskip

\noindent
Finally, we consider limits for vanishing relaxation time.
To this end, we consider the spaces induced by the $\tau$ independent parts of the energies
\begin{equation}\label{U0}
\begin{aligned}
&U^0_{lo}:=H^2(0,T;L^2(\Omega))\cap H^1(0,T;H^1(\Omega))\\
&U^0_{me}:=H^2(0,T;H^1(\Omega))\cap H^1(0,T;H^2_\Delta(\Omega))\\
&U^0_{hi}:=H^2(0,T;H^2_\Delta(\Omega))\cap H^1(0,T;H^3_\Delta(\Omega)).
\end{aligned}
\end{equation}

We obtain weak convergence results for the linear MGT, the JMGT-Westervelt, and the JMGT-Kuznetsov case.

\begin{theorem}\label{thm:taulimit} 
In the linear setting, under the assumptions of Theorem~\ref{thm:JMGT-linearized} (i), (ii), or (iii), 
the family $(u^\tau)_{\tau\in(0,\taubar]}$ of solutions to 
\eqref{JMGT-linearized},
\eqref{periodic},
\eqref{bndy}
converges weakly in $U^0_{lo}$, $U^0_{me}$, or $U^0_{hi}$, respectively, to the solution $\bar{u}$ of 
\eqref{JMGT-linearized} with $\tau=0$,
\eqref{periodic},
\eqref{bndy}.

In the nonlinear setting, under the assumptions of Theorem~\ref{thm:JMGT-Westervelt} / \ref{thm:JMGT-Kuznetsov}
the family $(u^\tau)_{\tau\in(0,\taubar]}$ of solutions to 
$\begin{cases}\text{\eqref{JMGT-Westervelt}} \\ \text{\eqref{JMGT-Kuznetsov}}\end{cases}$ 
with \eqref{periodic}, \eqref{bndy}
converges weakly in $\begin{cases}U^0_{me} \\ U^0_{hi}\end{cases}$ to the solution $\bar{u}$ of 
$\begin{cases}\text{\eqref{Westervelt}} \\ \text{\eqref{Kuznetsov}}\end{cases}$ 
with \eqref{periodic}, \eqref{bndy}.
\end{theorem}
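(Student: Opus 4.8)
The plan is to treat the three cases (linear MGT, JMGT-Westervelt, JMGT-Kuznetsov) uniformly via a compactness argument based on the $\tau$-uniform energy bounds already established in Theorems~\ref{thm:JMGT-linearized}, \ref{thm:JMGT-Westervelt}, \ref{thm:JMGT-Kuznetsov}. First I would observe that in each of those theorems the right-hand side of the energy estimate is bounded by a fixed multiple of a norm of $f$ that does not involve $\tau$ (the $\taubar$-prefactors only make the bounds \emph{smaller}), so the family $(u^\tau)_{\tau\in(0,\taubar]}$ is bounded in the full energy space. Discarding the $\tau$-weighted contributions, we retain a uniform bound on $(u^\tau)$ in $U^0_{lo}$, $U^0_{me}$, or $U^0_{hi}$ respectively. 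Since these are Hilbert spaces, along a subsequence $u^{\tau_k}\rightharpoonup \bar u$ weakly in $U^0_{*}$; by the Banach--Alaoglu / Eberlein--\v{S}mulian argument the whole family converges once we show the limit is unique, which follows from uniqueness of the limit equation (Theorem~\ref{thm:JMGT-Westervelt}/\ref{thm:JMGT-Kuznetsov} with $\tau=0$, which is part of those statements).

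Next I would pass to the limit in the weak formulation. Testing \eqref{JMGT-linearized} (or the nonlinear analogues) against a smooth time-periodic test function $\varphi$ and integrating over $(0,T)\times\Omega$, the term $\tau\int_0^T\!\!\int_\Omega u^\tau_{ttt}\,\varphi$ can be rewritten, after integration by parts in time using \eqref{periodic}, as $-\tau\int_0^T\!\!\int_\Omega u^\tau_{tt}\,\varphi_t$; this is $O(\tau)\cdot\|u^\tau_{tt}\|\cdot\|\varphi_t\|$, hence vanishes as $\tau\to0$ because $\|u^\tau_{tt}\|_{L^2(L^2)}$ (resp.\ in the appropriate space) is uniformly bounded by the retained energy. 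All remaining linear terms are weakly continuous on $U^0_{*}$, so they pass to the limit directly. For the boundary terms, one uses that the trace map is continuous and that $U^0_{me}$, $U^0_{hi}$ control enough spatial regularity; note that for the nonlinear cases the relevant boundary part is $\Gamma_a=\emptyset$ (Westervelt) or $\partial\Omega\in\{\Gamma_i,\Gamma_N,\Gamma_D\}$ (Kuznetsov), so the $\taubar$-weighted $\Gamma_a$ boundary terms are absent anyway.

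The main obstacle is the nonlinearity: one must show $\eta((u^\tau)^2)_{tt}\rightharpoonup \eta(\bar u^2)_{tt}$ (Westervelt) and $(\tilde\eta(u^\tau_t)^2+|\nabla u^\tau|^2)_t\rightharpoonup (\tilde\eta\bar u_t^2+|\nabla\bar u|^2)_t$ (Kuznetsov) in the sense of distributions. The plan here is the Aubin--Lions--Simon lemma: the uniform bounds in $U^0_{me}$ (resp.\ $U^0_{hi}$) give, via compact Sobolev embeddings, strong convergence of $u^\tau\to\bar u$ in, say, $H^1(0,T;H^{1}(\Omega))$ for Westervelt and in $H^2(0,T;H^1(\Omega))\cap H^1(0,T;H^{2}(\Omega))$ for Kuznetsov (up to a subsequence), because in each case the energy space embeds compactly into a space with slightly less regularity in \emph{both} time and space. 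Strong $L^2(L^2)$ (resp.\ $L^4(L^4)$ via interpolation and the $L^\infty$-bounds obtained in the well-posedness proofs) convergence of $u^\tau$, $u^\tau_t$, $\nabla u^\tau$ then lets one pass to the limit in the quadratic terms, identifying weak limits of products with products of weak limits; the outer time derivative is then handled distributionally against $\varphi_t$ or $\varphi_{tt}$. Once the limit is identified as \emph{a} solution of the $\tau=0$ problem and uniqueness of that solution is invoked, the subsequence principle upgrades convergence of the whole family, completing the proof.

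I would finally remark that the \emph{rate} $O(\sqrt\tau)$ or $O(\tau)$ in stronger-than-weak topologies is not claimed here and would require an energy estimate for the difference $u^\tau-\bar u$; the present statement only asserts weak convergence, which is exactly what the uniform bounds plus compactness deliver with no further work.
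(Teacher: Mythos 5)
Your proposal is correct and follows essentially the same route as the paper, which derives uniform-in-$\tau$ bounds from the $\tau$-independent parts of the energies in Theorems~\ref{thm:JMGT-linearized}, \ref{thm:JMGT-Westervelt}, \ref{thm:JMGT-Kuznetsov}, extracts weak limits, passes to the limit in the weak formulation as in \cite[Theorem 7.1]{JMGT} (including compactness for the quadratic terms), and concludes via a subsequence--subsequence argument using uniqueness of the $\tau=0$ solution. The paper states this only in outline with a reference, so your more detailed account of the vanishing $\tau u_{ttt}$ term and the Aubin--Lions step is a faithful expansion of the intended argument rather than a departure from it.
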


Given the estimates in Theorems~\ref{thm:JMGT-linearized}, \ref{thm:JMGT-Westervelt}, \ref{thm:JMGT-Kuznetsov} and considering the $\tau$ independent parts of the energies, the proof of Theorem~\ref{thm:taulimit} follows analogosly to the one of \cite[Theorem 7.1]{JMGT}; see also \cite{BongartiCharoenphonLasiecka20,JMGT_Neumann}.
Uniqueness of a solution to the limiting equation, as required for the subsequence-subsequence argument in this proof, follows from 
the case $\tau=0$ in Theorems~\ref{thm:JMGT-linearized}, \ref{thm:JMGT-Westervelt}, \ref{thm:JMGT-Kuznetsov}.

\skipKuznetsov{
The difference between the results on Kuznetsov's equation contained in Theorems~\ref{thm:Kuznetsov} and \ref{thm:taulimit} lies in their level of regularity, which is ``medium'' in the former and ``high'' in the latter case.
}

\section{Linear well-posedness; proof of Theorem~\ref{thm:JMGT-linearized}}
\label{sec:linearwellposedness}
To study well-posedness of the linearized equation \eqref{JMGT-linearized}, for conciseness of notation we write it as
\begin{equation}\label{JMGT_linearized_rtil}
(\tau \partial_t^3 + \alpha\partial_t^2) u+ (b\partial_t+c^2\text{id})(-\Delta) u + \tilde{r}=0
\text{ on }\Omega\times(0,T)
\end{equation}

Its well-posendess analysis with energy estimates is the fundamental building block for proving the self-mapping and contractivity estimates in a fixed point proof, but also allows for a proof of Fr\'{e}chet differentiability of the source-to-state map $\mathcal{S}$ in Section~\ref{sec:nonlinearwellposedness}.

\subsubsection*{Variational formulation}
We will carry out our analysis on three levels of regularity with respect to the solution (cf. \eqref{Ulo}, \eqref{Ume}, \eqref{Uhi}) and test spaces
\begin{eqnarray}
&&V_{lo}=\{v\in L^2(0,T;H^1(\Omega))\, : \, 
v=0 \mbox{ on }\Gamma_D, \
v(T)=v(0), \, v_t(T)=v_t(0)\}
\label{Vlo}\\ 
&&V_{me}=\{v\in L^2(0,T;L^2(\Omega))\, : \, -\Delta v\in L^2(0,T;L^2(\Omega))\,:\,
\nonumber \\
&&\qquad\qquad\qquad\qquad\partial_\nu v+(\beta \partial_t+\gamma\text{id}) v =0 \mbox{ on }\Gamma_a\cup\Gamma_i\cup\Gamma_N,\
v=0 \mbox{ on }\Gamma_D, 
\nonumber \\ 
&&\qquad\qquad\qquad\qquad v(T)=v(0), \, v_t(T)=v_t(0)\}
\label{Vme}\\ 
&&V_{hi}=\{v\in V_{me}\, : \, \Delta v\in V_{me}\}
\label{Vhi}
\end{eqnarray}

Testing \eqref{JMGT_linearized_rtil} with $v\in V_{lo}$ we obtain a formulation of \eqref{JMGT_linearized_rtil} as a \underline{v}ariational \underline{e}quation 
\begin{equation}\label{ve_lo}
\begin{aligned}
&u\in U_{lo}, \tau u_{tt}(T)=\tau u_{tt}(0)\text{ and for all }v\in V_{lo}\\
&(ve)\quad
\left\{\begin{array}{l}
{\displaystyle
\int_0^T\Bigl\{\int_\Omega \Bigl( \bigl((\tau \partial_t^3 + \alpha\partial_t^2) u+ \rtil\bigr)\, v
+\nabla(b\partial_t+c^2\text{id}) u\cdot\nabla v\Bigr)\, dx 
}\\
{\displaystyle \qquad
+\int_{\Gamma_a\cup\Gamma_i} (\beta \partial_t+\gamma\text{id})(b\partial_t+c^2\text{id})u\, v\, dS
\Bigr\}\, dt=0
}\end{array}\right.
\end{aligned}
\end{equation}

\subsubsection*{Galerkin approximation} 
To account for periodicity as well as the absorbing boundary condition (which combines zero and first order time derivative values) we use periodic space-time Galerkin ansatz functions 
\[
U_{MJ}=\{(t,x)\mapsto\Re\Bigl(\sum_{m=1}^M\exp(\imath m\omega t)\hat{v}_m(x)\Bigr)\, : \, \hat{v}_m\in \text{span}(\phi_{m\,1},\ldots,\phi_{m\,J})\}
\]
with $\omega=\frac{2\pi}{T}$ and $\phi_{m\,j}$ eigenfunctions of the Laplacian with frequency dependent impedance boundary conditions
\[
\begin{aligned}
&-\Delta \phi_{m\,j}=\lambda_{m\,j} \phi_{m\,j}, \text{ in }\Omega\\
&\partial_\nu \phi_{m\,j}+\imath m\omega\beta\phi_{m\,j}+\gamma \phi_{m\,j} =0 \mbox{ on }\Gamma_a\cup\Gamma_i\cup\Gamma_N,\
\phi_{m\,j}=0 \mbox{ on }\Gamma_D
\end{aligned}
\] 
By this construction we have 
\[
\begin{aligned}
&U_{MJ}\subseteq V_{hi}\subseteq V_{me} \text{ for all }N,\, J\, \in\mathbb{N} \text{ and }\\ 
&\overline{\bigcup_{M\in\mathbb{N}}\bigcup_{J\in\mathbb{N}}U_{MJ}}^{V_{hi}}=V_{hi}, \quad 
\overline{\bigcup_{M\in\mathbb{N}}\bigcup_{J\in\mathbb{N}}U_{MJ}}^{V_{me}}=V_{me}
\end{aligned}
\]


We will use three equivalent Galerkin formulations for the three stages of regularity
\begin{itemize}
\item[(i)] low; 
\commBK{(to be used for linear (J)MGT and as a foundation for all higher order estimates)\\}
by testing \eqref{ve_lo} with $v$,\commBK{\footnote{possibly also consider testing with $\tfrac{1}{c^2}v$ in order to allow for $c\in L^\infty(\Omega)$ (along with the assumption $\tfrac{b}{c^2}$ smooth enough); however, then consistency of the three Galerkin discretizations according to \eqref{equivalence} gets lost, so this testing strategy only makes sense for obtaining a low level energy estimate for the linear case.}} 
\begin{equation}\label{var_lo}
\begin{aligned}
&u\in U_{MJ}\subseteq U_{lo}, \tau u_{tt}(T)=\tau u_{tt}(0)\text{ and for all }v\in U_{MJ}\subseteq V_{lo}\\
&\int_0^T\Bigl\{\int_\Omega \Bigl( \bigl((\tau \partial_t^3 + \alpha\partial_t^2) u+ \rtil\bigr)\, v
+\nabla(b\partial_t+c^2\text{id}) u\cdot\nabla v\Bigr)\, dx 
\\ &\qquad
+\int_{\Gamma_a\cup\Gamma_i} (\beta \partial_t+\gamma\text{id})(b\partial_t+c^2\text{id})u\, v\, dS
\Bigr\}\, dt=0
\end{aligned}
\end{equation}
\item[(ii)] medium;
\commBK{ (for JMGT-Westervelt in pressure or potential formulation; for $\tau=0$, i.e.,  Kuznetsov); allows for $b,c\in L^\infty$\\}
by testing \eqref{ve_lo} with $-\Delta v$, 
\begin{equation}\label{var_me}
\begin{aligned}
&u\in U_{MJ}\subseteq U_{me}, \tau u_{tt}(T)=\tau u_{tt}(0)\text{ and for all }v\in U_{MJ}\subseteq V_{me}\\
&\int_0^T\Bigl\{\int_\Omega \Bigl( \nabla\bigl(\tau \partial_t^3 + \alpha\partial_t^2) u\bigr)\cdot\nabla v
+\bigl((b\partial_t+c^2\text{id})\Delta u- \rtil \bigr)\,\Delta v\Bigr)\, dx \\
&\qquad+\int_{\Gamma_a\cup\Gamma_i} (\tau \partial_t^3 + \alpha\partial_t^2) u\, (\beta \partial_t+\gamma\text{id})v\, dS
\Bigr\}\, dt=0
\end{aligned}
\end{equation}
\item[(iii)] high; 
\commBK{(for JMGT-Kuznetsov, that is, with gradient term and $\tau>0$);\\} 
by testing \eqref{ve_lo} with $(-\Delta)^2 v$, 
\begin{equation}\label{var_hi}
\begin{aligned}
&u\in U_{MJ}\subseteq U_{hi}, \tau u_{tt}(T)=\tau u_{tt}(0)\text{ and for all }v\in U_{MJ}\subseteq V_{hi}\\
&\int_0^T\Bigl\{
\int_\Omega \Bigl(\Delta\bigl((\tau \partial_t^3 + \alpha\partial_t^2) u\bigr)\, \Delta v+\nabla\bigl((b\partial_t+c^2\text{id})\Delta u\bigr)\,\nabla \Delta v + \Delta \rtil\, \Delta v\Bigr)\, dx\\
&\qquad+\int_{\Gamma_a\cup\Gamma_i} 
\bigl((\beta \partial_t+\gamma\text{id})(b\partial_t+c^2\text{id})\Delta u \bigr)\, \Delta v\, dS
\Bigr\}\, dt=0
\end{aligned}
\end{equation}
\commBK{
where the $\rtil$ term can alternatively be represented using integration by parts
\[
\int_\Omega \Delta \rtil\, \Delta v\, dx =
-\int_\Omega \nabla \rtil\, \nabla \Delta v\, dx +\int_{\partial\Omega} \partial_\nu \rtil\, \Delta v\, dS
\]
}

\commBK{
Alternative derivation of \eqref{var_hi}: Apply $\Delta $ to PDE and test with $\Delta v$:
\[
\int_0^T
\int_\Omega \Bigl(\Delta\bigl((\tau \partial_t^3 + \alpha\partial_t^2) u\bigr)\,\Delta v-\Delta\bigl((b\partial_t+c^2\text{id})\Delta u- \rtil \bigr)\, \Delta v\Bigr)\, dx\, dt =0
\]
}
\end{itemize}
Equivalence of the three Galerkin formulations \eqref{var_lo}, \eqref{var_me}, \eqref{var_hi} follows from  
\begin{equation}\label{equivalence}
v\in U_{MJ} \ \Leftrightarrow \ -\Delta v \in U_{MJ} \ \Leftrightarrow \ (-\Delta)^2 v \in U_{MJ}.
\end{equation}
As a consequence we can use them to derive low, medium and high order energy identities as well as subsequently estimates on the Galerkin discretized level.

Existence of a solution $u_{MJ}\in U_{MJ}$ to \eqref{var_lo}, (and thus of \eqref{var_me}, \eqref{var_hi}) follows from uniqueness (which is a consequence of the upcoming energy estimates) since  $U_{MJ}$ is finite dimensional.
\footnote{Thus, unlike \cite{periodicWestervelt,periodicWest_2}, we do not invoke periodic ODE theory such as the Floquet-Lyapunov Theorem, here.}

\subsubsection*{Energy estimates} 
In each of the three cases (i), (ii), (iii), we use $v=\taubar u_{MJ\,tt}+\sigma u_{MJ\,t}+\rho u_{MJ}$ with a proper choice of $\sigma$ and $\rho$.
\footnote{Note that testing with $\rho u$ (while being used to derive equipartition of energy in the initial value problem setting, cf. e.g., \cite{KLM12_MooreGibson,KLP12_JordanMooreGibson}) is here required for obtaining the respective full norm wrt time in the periodic setting. Indeed, combining e.g., an estimate of $u_t$ with time periodic boundary conditions on $u$ does not capture the full $H^1$ in time norm of $u$, since periodicity admits arbitrarily large time constant offsets.}
For easier readability we will skip the indices $M$ and $J$ in the energy estimates.

Using the fact that terms of the form $\int_0^T\tfrac{d}{dt}(\ldots)\, dt$ vanish due to periodicity and integrating by parts with respect to time (where again, due to periodicity we can skip initial-end terms and where we use the identity $\alpha u_t u_{tt}=\alpha\frac12\frac{d}{dt}(u_t^2)=\frac12(\frac{d}{dt}(\alpha u_t^2)-\alpha_t u_t^2)$) yields
\begin{itemize}
\item the low order energy identity
\commBK{
\[
\hspace*{-3cm}\begin{aligned}
0=&\int_0^T\Bigl\{\int_\Omega \Bigl( \bigl((\tau \partial_t^3 + \alpha\partial_t^2) u+ \rtil\bigr)\, (\taubar u_{tt}+\sigma u_t+\rho u)
+\nabla\bigl(b\partial_t+c^2\text{id}) u\bigr)\cdot\nabla (\taubar u_{tt}+\sigma u_t+\rho u)\Bigr)\, dx \\
&\qquad+\int_{\Gamma_a\cup\Gamma_i} (\beta \partial_t+\gamma)(b\partial_t+c^2\text{id})u\, (\taubar u_{tt}+\sigma u_t+\rho u)\, dS
\Bigr\}\, dt\\
\end{aligned}
\]
}
\begin{equation}\label{enid_lo}
\begin{aligned}
0=&\int_0^T\Bigl\{\int_\Omega \Bigl( (\taubar\alpha-\tau\sigma) u_{tt}^2 -(\rho\alpha+\tfrac{\sigma}{2}\alpha_t)u_t^2-\rho\alpha_t u_t u  + \rtil\, (\taubar u_{tt}+\sigma u_t+\rho u)\\
&\qquad
-\nabla\cdot\bigl(\nabla b \,u_t+\nabla c^2 \, u\bigr)\,(\taubar u_{tt}+\sigma u_t+\rho u)
+(\sigma b-\taubar c^2)|\nabla u_t|^2+\rho c^2 |\nabla u|^2\Bigr)\, dx \\
&\qquad+\int_{\Gamma_a\cup\Gamma_i} \bigl(\taubar\beta b u_{tt}^2 
+(\beta(\sigma c^2-\rho b)+\gamma(\sigma b-\taubar c^2))
u_t^2+\rho\gamma c^2 u^2\bigr)\, dS\\
&\qquad+\int_{\partial\Omega} \bigl(\partial_\nu b \,u_t+\partial_\nu c^2 \, u\bigr)\,(\taubar u_{tt}+\sigma u_t+\rho u)
\Bigr\}\, dt
\end{aligned}
\end{equation}
\item the medium order energy identity
\commBK{
\[
\hspace*{-3cm}\begin{aligned}
0=&\int_0^T\Bigl\{\int_\Omega \Bigl( \nabla\bigl((\tau \partial_t^3 + \alpha\partial_t^2) u\bigr)\cdot\nabla (\taubar u_{tt}+\sigma u_t+\rho u)
+\bigl((b\partial_t+c^2\text{id})\Delta u- \rtil\bigr)\,\Delta (\taubar u_{tt}+\sigma u_t+\rho u)\Bigr)\, dx \\
&\qquad+\int_{\Gamma_a\cup\Gamma_i} (\tau \partial_t^3 + \alpha\partial_t^2) u\, (\beta \partial_t+\gamma)(\taubar u_{tt}+\sigma u_t+\rho u)\, dS
\Bigr\}\, dt
\end{aligned}
\]
}
\begin{equation}\label{enid_me}
\begin{aligned}
0=&\int_0^T\Bigl\{\int_\Omega \Bigl( \nabla\alpha u_{tt}\cdot\nabla (\taubar u_{tt}+\sigma u_t+\rho u)
+(\taubar\alpha-\tau\sigma)|\nabla u_{tt}|^2\\
&\qquad\qquad -(\rho\alpha+\tfrac{\sigma}{2}\alpha_t) |\nabla u_t|^2-\rho\alpha_t\nabla u_t\cdot\nabla u\\
&\qquad\qquad+(\sigma b-\taubar c^2)(\Delta u_t)^2 + \rho c^2 (\Delta u)^2
- \rtil \,\Delta (\taubar u_{tt}+\sigma u_t+\rho u)\Bigr)\, dx \\
&\qquad+\int_{\Gamma_a\cup\Gamma_i} \Bigl(
\tau\taubar\beta u_{ttt}^2+
\bigl(\beta(\sigma\alpha-\tau\rho-\tfrac{\taubar}{2}\alpha_t)+\gamma(\taubar\alpha-\sigma\tau)\bigr)
u_{tt}^2\\
&\qquad\qquad\qquad-\bigl(\rho\gamma\alpha+\tfrac12(\rho\beta+\sigma\gamma)\alpha_t\bigr) u_t^2
-\rho\gamma\alpha_t u_t u\Bigr)\, dS
\Bigr\}\, dt
\end{aligned}
\end{equation}
where we split $\rtil=\rtil^\nabla+\rtil^t$ and assume that $\rtil^t$ satisfies homogeneous Dirichlet boundary conditions;\footnote{Later on we will set $\rtil^t:=r^t[u]$ which inherits homogeneous Dirichlet boundary conditions from $u$}
using integration by parts, as well as the absorbing, impedance and Neumann boundary conditions that $u_{tt}$ satisfies, we have
\begin{equation}\label{est_ru_me}
\begin{aligned}
&\int_0^T\int_\Omega \rtil\,\taubar\Delta u_{tt}
= -\taubar\Bigl(\int_0^T\int_\Omega (\rtil^\nabla_t\Delta u_t+\nabla \rtil^t\cdot\nabla u_{tt})
+\int_0^T\int_\Gamma \rtil^t(\beta u_{ttt}+\gamma u_{tt})\, dS\, dt\Bigr)\\
&\leq \taubar \|\rtil^\nabla_t\|_{L^2(L^2(\Omega))} \|\Delta u_t\|_{L^2(L^2(\Omega))}
+\sqrt{\taubar} \|\nabla \rtil^t\|_{L^2(L^2(\Omega))} \sqrt{\taubar}\|\nabla u_{tt}\|_{L^2(L^2(\Omega))}\\
&\qquad+\|\rtil^t\|_{L^2(L^2(\Gamma))}(\taubar\|\beta u_{ttt}\|_{L^2(L^2(\Gamma_a))}+\taubar\|\gamma u_{tt}\|_{L^2(L^2(\Gamma))})\\
&\leq \frac{\upsilon}{2}\Bigl(\|\Delta u_t\|_{L^2(L^2(\Omega))}^2+\taubar\|\nabla u_{tt}\|_{L^2(L^2(\Omega))}^2
+\taubar^2\|\beta u_{ttt}\|_{L^2(L^2(\Gamma_a))}^2+\taubar^2\|\gamma u_{tt}\|_{L^2(L^2(\Gamma))}^2\Bigr)\\
&\qquad+\frac{1}{2\upsilon}\Bigl(\taubar^2 \|\rtil^\nabla_t\|_{L^2(L^2(\Omega))}^2+\taubar\|\nabla \rtil^t\|_{L^2(L^2(\Omega))}^2 +\|\rtil^t\|_{L^2(L^2(\Gamma))}^2\Bigr).
\end{aligned}
\end{equation}
\commBK{
Alternatively, 
\[
\begin{aligned}
&\leq \frac{\upsilon}{2}\Bigl(\|\Delta u_t\|_{L^2(L^2(\Omega))}^2+\taubar\|\nabla u_{tt}\|_{L^2(L^2(\Omega))}^2
+\taubar\|\beta u_{ttt}\|_{L^2(L^2(\Gamma_a))}^2+\taubar\|\gamma u_{tt}\|_{L^2(L^2(\Gamma))}^2\Bigr)\\
&\qquad+\frac{1}{2\upsilon}\Bigl(\taubar^2 \|r^\nabla[u]\|_{L^2(L^2(\Omega))}^2+\taubar\|\nabla r^t[u]\|_{L^2(L^2(\Omega))}^2 +\taubar\|r^t[u]\|_{L^2(L^2(\Gamma))}^2\Bigr).
\end{aligned}
\]
where we could estimate the boundary term on $\Gamma$ (actually even $\taubar\|\beta u_{ttt}\|_{L^2(L^2(\Gamma_a))}^2+\|\gamma u_{tt}\|_{L^2(L^2(\Gamma))}^2$) by $\mathcal{E}_{lo}[u_t]$.
}
\item the high order energy identity
\commBK{
\[
\hspace*{-3cm}\begin{aligned}
0=&\int_0^T\Bigl\{
\int_\Omega \Bigl(\Delta\bigl((\tau \partial_t^3 + \alpha\partial_t^2) u\bigr)\, \Delta (\taubar u_{tt}+\sigma u_t+\rho u)+\nabla\bigl((b\partial_t+c^2\text{id})\Delta u- \rtil\bigr)\,\nabla \Delta (\taubar u_{tt}+\sigma u_t+\rho u)\Bigr)\, dx\\
&\qquad+\int_{\Gamma_a\cup\Gamma_i} 
\bigl((\beta \partial_t+\gamma)(b\partial_t+c^2\text{id})\Delta u +\partial_\nu \rtil\bigr)\, \Delta (\taubar u_{tt}+\sigma u_t+\rho u)\, dS
\Bigr\}\, dt
\end{aligned}
\]
}
\begin{equation}\label{enid_hi}
\begin{aligned}
0=&\int_0^T\Bigl\{\int_\Omega \Bigl( (\Delta\alpha u_{tt}+2\nabla\alpha\cdot \nabla u_{tt})\cdot\Delta (\taubar u_{tt}+\sigma u_t+\rho u)
+(\taubar\alpha-\tau\sigma)|\Delta u_{tt}|^2\\
&\qquad\qquad 
-(\rho\alpha+\tfrac{\sigma}{2}\alpha_t) |\Delta u_t|^2
-\rho\alpha_t \Delta u_t\, \Delta u\\
&\qquad\qquad+(\sigma b-\taubar c^2)(\nabla\Delta u_t)^2 + \rho c^2(\nabla\Delta u)^2\\
&\qquad\qquad
-\bigl(\nabla b\, \Delta u_{tt}+\nabla c^2\, \Delta u_t\bigr)\cdot\nabla\Delta (\taubar u_t+\sigma u)
+\rho\bigl(\nabla b\, \Delta u_t+\nabla c^2\, \Delta u\bigr)\cdot\nabla\Delta u
\\&\qquad\qquad 
+\Delta \rtil \,\Delta (\taubar u_{tt}+\sigma u_t+\rho u)
\Bigr)\, dx \\
&\qquad+\int_{\Gamma_a\cup\Gamma_i} \bigl(
\taubar\beta b (\Delta u_{tt})^2
+
(\beta(\sigma c^2-\rho b)+\gamma(\sigma b-\taubar c^2))
(\Delta u_t)^2  
+\rho\gamma c^2  (\Delta u)^2
\bigr)\, dS
\Bigr\}\, dt
\end{aligned}
\end{equation}
\end{itemize}

To derive energy estimates from \eqref{enid_lo}, \eqref{enid_me}, \eqref{enid_hi}, we enforce positivity of the multipliers of the seminorms appearing here by imposing the conditions
\begin{equation}\label{condsigmarho}
\overline{\tau}\tfrac{c^2}{b}<\sigma<\alpha, \quad
\rho\tfrac{b}{c^2}<\sigma, \quad
\rho\leq\tfrac{\sigma}{\tau}\alpha 
\end{equation}
on $\sigma>0$ and $\rho>0$, which can be achieved due to \eqref{stabilitycond} with the choices
\[
\sigma:=\inf_{(0,T)\times\Omega}\tfrac12(\taubar\tfrac{c^2}{b}+\alpha)>0 \text{ and $\rho>0$ small enough (but independent of $\tau$).}
\]
\commBK{Note that we don't want to have $\sigma,\rho\to0$ as $\tau\to0$ in view of our plan to take the limit $\tau\to0$ in Theorem~\ref{thm:taulimit}.}

Moreover, we make use of \eqref{PF}, that, e.g., in the low order energy identity allows us to estimate
\[
\begin{aligned}
&\int_0^T\int_\Omega (\rho\alpha+\tfrac{\sigma}{2}\alpha_t)u_t^2\, dx\, dt\\
&\leq  \|\rho\alpha+\tfrac{\sigma}{2}\alpha_t\|_{L^\infty(L^{q^*}(\Omega)}
(C_{H^1\to L^{2q}}^\Omega)^2 
C_{1\,PF}(\|\nabla u_t\|_{L^2(L^2(\Omega))}^2+\|\nabla u_t\|_{L^2(L^2(\Gamma_a\cup\Gamma_i))}^2)\\
&\leq 
\int_0^T\Bigl\{\int_\Omega \tfrac12(\sigma b-\taubar c^2)|\nabla u_t|^2 \, dx + \tfrac12\int_{\Gamma_a\cup\Gamma_i} (\beta(\sigma c^2-\rho b)+\gamma(\sigma b-\taubar c^2)) u_t^2\, dS\Bigr\}\, dt,
\end{aligned}
\]
provided 
\begin{equation}\label{smallalphat}
\|\rho\alpha+\tfrac{\sigma}{2}\alpha_t\|_{L^\infty(L^{q^*}(\Omega)} 
\leq \frac{\min\{\inf_\Omega \sigma b-\taubar c^2,\,
\inf_{\Gamma_a\cup\Gamma_i}\beta(\sigma c^2-\rho b)+\gamma(\sigma b-\taubar c^2)\}}{2(C_{H^1\to L^{2q}}^\Omega)^2 C_{1\,PF}}
\end{equation}

The further terms that arise due to varying coefficients in \eqref{enid_lo}, \eqref{enid_me}, \eqref{enid_hi} can be estimated as follows
\begin{itemize}
\item in the low order energy identity:
\[\begin{aligned}
&\int_0^T\int_\Omega -\nabla\cdot\bigl(\nabla b \,u_t+\nabla c^2 \, u\bigr)\,(\taubar u_{tt}+\sigma u_t+\rho u)\, dx\, dt\\
&\leq \Bigl(
\max\{\|\nabla b\|_{L^\infty(L^2(\Omega))},\,\sqrt{T}\|\nabla c^2\|_{L^2(L^2(\Omega))}\} \\
&\qquad+C_{H^1\to L^p}\max\{\|\Delta b\|_{L^\infty(L^{\hat{p}}(\Omega))},\,\sqrt{T}\|\Delta c^2\|_{L^2(L^{\hat{p}}(\Omega))}\Bigr) \|u\|_{H^1(H^1(\Omega))}\\
&\qquad\qquad\Bigl(\taubar \|u_{tt}\|_{L^2(L^2(\Omega))}+\sigma\|u_t\|_{L^2(L^2(\Omega))}+\rho\|u\|_{L^2(L^2(\Omega))} \Bigr)
\end{aligned}\]
for $1-\frac{d}{2}\geq-\frac{d}{p}$, $\hat{p}=\frac{2p}{p-2}$
and we make the (realistic) assumption $\partial_\nu b=0$, $\partial_\nu c^2=0$ on $\partial\Omega$ to remove the boundary term containing these normal derivatives.
\item in the medium order energy identity:
\[\begin{aligned}
&\int_0^T\int_\Omega \nabla\alpha u_{tt}\cdot\nabla (\taubar u_{tt}+\sigma u_t+\rho u)\, dx\, dt\\
&\leq \|\nabla\alpha\|_{L^\infty(L^{\hat{p}}(\Omega))} \|u_{tt}\|_{L^2(L^p(\Omega))}
\Bigl(\taubar \|\nabla u_{tt}\|_{L^2(L^2(\Omega))}+\sigma\|\nabla u_t\|_{L^2(L^2(\Omega))}+\rho\|\nabla u\|_{L^2(L^2(\Omega))} \Bigr)
\end{aligned}\]
Note that no derivatives of $b$, $c^2$ are needed here, which will allow us to work with just $L^\infty(\Omega)$ attenuation and sound speed in certain cases. 
\item in the high order energy identity:
\[\begin{aligned}
&\int_0^T\int_\Omega (\Delta\alpha u_{tt}+2\nabla\alpha\cdot \nabla u_{tt})\cdot\Delta (\taubar u_{tt}+\sigma u_t+\rho u)\, dx\, dt\\
&\leq \Bigl(\|\Delta\alpha\|_{L^\infty(L^2(\Omega))} \|u_{tt}\|_{L^2(L^\infty(\Omega))}
+ 2\|\nabla\alpha\|_{L^\infty(L^{\hat{p}}(\Omega))} \|\nabla u_{tt}\|_{L^2(L^p(\Omega))}\Bigr)\\
&\qquad\qquad\Bigl(\taubar \|\Delta u_{tt}\|_{L^2(L^2(\Omega))}+\sigma\|\Delta u_t\|_{L^2(L^2(\Omega))}+\rho\|\Delta u\|_{L^2(L^2(\Omega))} \Bigr)
\end{aligned}\]
\[\begin{aligned}
&\int_0^T\int_\Omega 
\bigl(\nabla b\, \Delta u_{tt}+\nabla c^2\, \Delta u_t\bigr)\cdot\nabla\Delta (\taubar u_t+\sigma u)
\, dx\, dt\\
&\leq \Bigl(\|\nabla b\|_{L^\infty(\Omega)} \|\Delta u_{tt}\|_{L^2(L^2(\Omega))}
+ \|\nabla c^2\|_{L^\infty(\Omega)} \|\Delta u_t\|_{L^2(L^2(\Omega))} \Bigr)
\\
&\qquad\qquad\Bigl(\taubar \|\nabla\Delta u_t\|_{L^2(L^2(\Omega))}+\sigma\|\nabla\Delta u\|_{L^2(L^2(\Omega))} \Bigr)
\end{aligned}\]
\end{itemize}
By imposing smallness of the derivatives of $\alpha$, $b$, $c^2$ in the norms appearing here, we can absorb them into the positive terms on the left hand side of \eqref{enid_lo}, \eqref{enid_me}, \eqref{enid_hi}.

\medskip

To obtain an estimate on the third time derivative of $u$, we use the PDE (more precisely, we test the Galerkin discretizations with $u_{MJ\,ttt}$ and apply the Cauchy-Schwarz inequality) to obtain estimates of the type 
\[
\tau\|u_{ttt}\|_{L^2(X)}\leq \|\alpha u_{tt}+(b\partial_t+c^2\text{id})(-\Delta) u + \tilde{r}\|_{L^2(X)}
\]
with different spaces $X$ depending on the already obtained regularity of $\alpha u_{tt}+(b\partial_t+c^2\text{id})(-\Delta) u + \tilde{r}$

Thus, the following energy estimates result:
\begin{itemize}
\item from \eqref{enid_lo}:
\begin{equation}\label{energies_lo}
\begin{aligned}
\mathcal{E}_{lo}(u)&=
\taubar\tau^2\|u_{ttt}\|_{L^2(H^1(\Omega)^*)}^2+
\taubar\|u_{tt}\|_{L^2(L^2(\Omega))}^2 + \|u\|_{H^1(H^1(\Omega))}^2\\
&\qquad
+ \taubar \|u_{tt}\|_{L^2(L^2(\Gamma_a))}^2+\|\sqrt{\gamma} u\|_{H^1(L^2(\Gamma))}^2\\
&\lesssim \taubar\|\rtil\|_{L^2(L^2(\Omega))}^2+\|\rtil\|_{L^2(H^1(\Omega)^*)}^2
\end{aligned}
\end{equation}
\commBK{
\[
\mathcal{E}_{lo}(u_t)
\lesssim \taubar\|\alpha_t u_{tt}+(\rtil)_t\|_{L^2(L^2(\Omega))}^2+\|\alpha_t u_{tt}+(\rtil)_t\|_{L^2(H^1(\Omega)^*)}^2
\quad \text{(by differentiating the PDE wrt time)}
\]
}
\item from \eqref{enid_lo}+$\chi$\eqref{enid_me} with 
\[\frac{1}{\chi}=2\max\left\{
\left\|\frac{\rho\alpha+\tfrac{\sigma}{2}\alpha_t}{\sigma b-\taubar c^2}\right\|_{L^\infty(L^\infty(\Omega))},\,
\left\|\frac{\rho\gamma\alpha+\tfrac12(\rho\beta+\sigma\gamma)\alpha_t}{\beta(\sigma c^2-\rho b)+\gamma(\sigma b-\taubar c^2)}\right\|_{L^\infty(L^\infty(\Omega))}\right\}
\]
\begin{equation}\label{energies_me}
\begin{aligned}
\bar{\mathcal{E}}_{me}(u)&=
\mathcal{E}_{lo}(u)+
\taubar\tau^2\|u_{ttt}\|_{L^2(L^2(\Omega))}^2+
\taubar\|u_{tt}\|_{L^2(H^1(\Omega))}^2 + \|\Delta u\|_{H^1(L^2(\Omega))}^2\\
&\qquad+ \taubar\tau \|u_{ttt}\|_{L^2(L^2(\Gamma_a))}^2+ \|\sqrt{\gamma} u\|_{H^2(L^2(\Gamma))}^2\\
&\lesssim 
\mathcal{E}_{lo}(u)
+\taubar^2\|\rtil^\nabla_t\|_{L^2(L^2(\Omega))}^2
+\taubar\|\nabla \rtil^t\|_{L^2(L^2(\Omega))}^2
+\|\rtil^t\|_{L^2(L^2(\Gamma))}^2
+\|\rtil\|_{L^2(L^2(\Omega))}^2
\end{aligned}
\end{equation}
\commBK{ or 
\[
\lesssim 
\mathcal{E}_{lo}(u)+\taubar \mathcal{E}_{lo}(u_t)
+\taubar^2\|\rtil^\nabla_t\|_{L^2(L^2(\Omega))}^2
+\taubar\|\nabla \rtil^t\|_{L^2(L^2(\Omega))}^2
+\taubar\|\rtil^t\|_{L^2(L^2(\Gamma))}^2
+\|\rtil\|_{L^2(L^2(\Omega))}^2
\]
}
\item from \eqref{enid_lo}+$\chi$(\eqref{enid_me}+$\lambda$\eqref{enid_hi}) with 
\[
\frac{1}{\lambda}=2\left\|\frac{\rho\alpha+\tfrac{\sigma}{2}\alpha_t}{\sigma b-\taubar c^2}\right\|_{L^\infty(L^\infty(\Omega))}
\]
\begin{equation}\label{energies_hi}
\begin{aligned}
\bar{\mathcal{E}}_{hi}(u)&=
\mathcal{E}_{me}(u)+
\taubar\tau^2\|u_{ttt}\|_{L^2(H^1(\Omega)^*)}^2+
\taubar\|\Delta u_{tt}\|_{L^2(L^2(\Omega))}^2 + \|\nabla\Delta u\|_{H^1(L^2(\Omega))}^2\\
&\qquad
+ \taubar \|\Delta u_{tt}\|_{L^2(L^2(\Gamma_a))}^2 + \|\gamma \Delta u\|_{H^1(L^2(\Gamma))}^2\\
&\lesssim
\mathcal{E}_{me}(u)
+\taubar \|\Delta \rtil\|_{L^2(L^2(\Omega))}^2
+\|\Delta \rtil\|_{L^2(H^1(\Omega)^*)}^2
\end{aligned}
\end{equation}
\commBK{
or 
\[\lesssim 
\mathcal{E}_{me}(u)
+\taubar \|\Delta \rtil\|_{L^2(L^2(\Omega))}^2
+\|\nabla \rtil\|_{L^2(L^2(\Omega))}^2
+\|\partial_\nu \rtil\|_{L^2(L^2(\partial\Omega))}^2
\]
or 
\[\lesssim 
\mathcal{E}_{me}(u)
+\taubar \|\nabla \rtil_t\|_{L^2(L^2(\Omega))}^2+\|\partial_\nu \rtil\|_{L^2(L^2(\Gamma))}^2 +\|\nabla \rtil\|_{L^2(L^2(\Omega))}^2+\|\partial_\nu \rtil\|_{L^2(L^2(\Gamma))}^2,
\]
where we hope to use the absorbing/impedance boundary conditions on $u$ in order to represent $\partial_\nu \rtil$; however, this is only possible for the $r^t$ part.
}
\end{itemize}

\subsubsection*{Full norm estimates} 
Concluding full $H^2(\Omega)$ regularity of $u$ or $\nabla u$ from $L^2(\Omega)$ boundedness of $\Delta u$ or $\Delta \nabla u$ by means of elliptic regularity is impeded by the mixed boundary conditions.
We will therefore use Stampacchia's method on the medium energy level and restrict attention to simple boundary configurations on the high energy level.

On the medium energy level, we have $\|u(t)\|_{L^\infty(\Omega)}\lesssim\|u(t)\|_{H^1(\Omega)}$ in case $d=1$ and can make use of Stampacchia's method to obtain an $L^\infty(\Omega)$ bound in case $d\in\{2,3\}$; cf., e.g. \cite[Proposition 4.1]{Consiglieri:2014}, which for a Lipschitz domain $\Omega$ yields 
\def\rrr{\mathfrak{r}}
\def\sss{\mathfrak{s}}
\[
\|u(t)\|_{L^\infty(\Omega)}\leq K(\rrr,\sss) (\| -\Delta u(t)\|_{L^{\rrr}(\Omega)} +\| \text{tr}_{\Gamma_D}u(t)\|_{L^\infty(\Gamma_D)}+\| \partial_\nu u(t)\|_{L^{\sss}(\partial\Omega\setminus\Gamma_D)})
\]
for any $\rrr>d/2$, $\sss>d-1$, $d\in\{2,3\}$. Using this with homogeneous Dirichlet, Neumann, impedance and absorbing boundary conditions yields
\[
\|u(t)\|_{L^\infty(\Omega)}\leq K(\rrr,\sss) (\| -\Delta u(t)\|_{L^{\rrr}(\Omega)} 
+\| \beta u_t(t)+\gamma u(t)\|_{L^{\sss}(\Gamma_a)})
+\| \gamma u(t)\|_{L^{\sss}(\Gamma_i)})
\]
hence with $\rrr=2$, $\sss\in(d-1,\frac{2(d-1)}{d-2})$
\[
\begin{aligned}
\|u\|_{H^1(L^\infty(\Omega))}^2
&\leq K(2,\sss)^2 (
\mathcal{E}_{me}(u) 
+\|\beta\|_{L^\infty(\Gamma_a)}\|\text{tr}_{\Gamma_a}\|_{H^1(\Omega)\to L^\sss(\Gamma_a)} 
\tfrac{1}{\taubar}\mathcal{E}_{me}(u)
\\
&\hspace*{2cm}+\|\gamma\|_{L^\infty(\Gamma)}\|\text{tr}_{\Gamma}\|_{H^1(\Omega)\to L^\sss(\Gamma)} \mathcal{E}_{lo}(u)
)\\
&\leq C_\infty 
(1+\frac{1}{\taubar})\bar{\mathcal{E}}_{me}(u).
\end{aligned}
\]
Moreover, in case $\Gamma_D=\emptyset$, an $H^{3/2}(\Omega)$ estimate can be concluded as follows
\[
\begin{aligned}
\|u \|_{H^1(H^{3/2}(\Omega))}^2
&\leq C_{ell,\frac12}\Bigl(\|\Delta u \|_{H^1(L^2(\Omega))}^2 + \|\partial_\nu u\|_{H^1(L^2(\partial\Omega))}^2\Bigr)\\
&\leq C_{ell,\frac12}\Bigl(\bar{\mathcal{E}}_{me}(u) 
+ \|\beta\|_{L^\infty(\Gamma_a)}\|u_t\|_{H^1(L^2(\Gamma_a))}^2
+\|\sqrt{\gamma} u\|_{H^1(L^2(\Gamma))}^2
\Bigr)
\\
&\leq C_{ell,\frac12}\Bigl(\bar{\mathcal{E}}_{me}(u) 
+ \max\{\tfrac{1}{\taubar},\,\|\text{tr}_{\Gamma_a}\|_{H^1(\Omega)\to L^2(\Gamma_a)},1\}\mathcal{E}_{lo}(u)\Bigr)\\
&\leq C_0 
(1+\frac{1}{\taubar})\bar{\mathcal{E}}_{me}(u).
\end{aligned}
\]

On the high energy level, we can obtain a full $H^1(0,T;H^3(\Omega))$ norm (and therewith an $L^\infty(0,T;W^{1,\infty}(\Omega))$) estimate of $u$ in the pure impedance boundary case $\partial\Omega=\Gamma_i\in C^{1,1}$ as follows.
\begin{equation}\label{a}
\begin{aligned}
&\|\partial_\nu u\|_{H^1(H^{1/2}(\Gamma_i))}^2 
=\|-\gamma \text{tr}_{\Gamma_i}u \|_{H^1(H^{1/2}(\Gamma_i))}^2
\leq C_{KP}^2 \|\gamma\|_{W^{1/2,\infty}(\Gamma_i)}^2 \|\text{tr}_{\Gamma_i}u \|_{H^1(H^{1/2}(\Gamma_i))}^2\\
&\leq C_{KP}^2 \|\gamma\|_{W^{1/2,\infty}(\Gamma_i)}^2
\|\text{tr}_{\Gamma_i}\|_{H^1(\Omega)\to H^{1/2}(\Gamma_i)}^2\, \|u \|_{H^1(H^1(\Omega))}^2\\
&\leq C_{KP}^2 \|\gamma\|_{W^{1/2,\infty}(\Gamma_i)}^2 \|\text{tr}_{\Gamma_i}\|_{H^1(\Omega)\to H^{1/2}(\Gamma_i)}^2\, \mathcal{E}_{lo}(u),
\end{aligned}
\end{equation}
where we have used a fractional Leibnitz rule estimate (often called Kato-Ponce inequality) in the first estimate, and further
\[
\begin{aligned}
\|u \|_{H^1(H^2(\Omega))}^2
&\leq C_{ell,1}\Bigl(\|\Delta u \|_{H^1(L^2(\Omega))}^2 + \|\partial_\nu u\|_{H^1(H^{1/2}(\partial\Omega))}^2\Bigr)\\
&\leq C_1 \bar{\mathcal{E}}_{me}(u).
\end{aligned}
\]
Repeating this procedure at one smoothess level higher (where the Kato Ponce estimate works slightly differently, since $H^{3/2}(\Gamma_i)$ is a Banach algebra), we get
\begin{equation}\label{b}
\begin{aligned}
&\|\partial_\nu u\|_{H^1(H^{3/2}(\Gamma_i))}^2 
=\|-\gamma \text{tr}_{\Gamma_i}u \|_{H^1(H^{3/2}(\Gamma_i))}^2
\leq C_{KP}^2 \|\gamma\|_{H^{3/2}(\Gamma_i)}^2 \|\text{tr}_{\Gamma_i}u \|_{H^1(H^{3/2}(\Gamma_i))}^2
\\
&\leq C_{KP}^2 \|\gamma\|_{H^{3/2}(\Gamma_i)}^2
\|\text{tr}_{\Gamma_i}\|_{H^2(\Omega)\to H^{3/2}(\Gamma_i)}^2\, \|u \|_{H^1(H^2(\Omega))}^2\\
&\leq C_{KP}^2 \|\gamma\|_{H^{3/2}(\Gamma_i)}^2 \|\text{tr}_{\Gamma_i}\|_{H^2(\Omega)\to H^{3/2}(\Gamma_i)}^2\, C_1 \bar{\mathcal{E}}_{me}(u)
\end{aligned}
\end{equation}
which together with 
\[
\begin{aligned}
&\|\Delta u \|_{H^1(H^1(\Omega))}^2
\leq C_{1\,PF} (\|\nabla\Delta u \|_{H^1(L^2(\Omega))}^2 + \|
\Delta u \|_{H^1(L^2(\Gamma_i))}^2)
\leq \tilde{C} \bar{\mathcal{E}}_{hi}(u).
\end{aligned}
\]
we can insert into the elliptic regularity estimate
\[
\begin{aligned}
\|u \|_{H^1(H^3(\Omega))}^2
&\leq C_{ell,2}\Bigl(\|\Delta u \|_{H^1(H^1(\Omega))}^2 + \|\partial_\nu u\|_{H^1(H^{3/2}(\partial\Omega))}^2\Bigr)\\
&\leq C_2 \bar{\mathcal{E}}_{hi}(u).
\end{aligned}
\]
This also works in the pure Neumann $\partial\Omega=\Gamma_i$ and in the pure Dirichlet case $\partial\Omega=\Gamma_D$, but not with mixed boundary conditions due to lack of regularity, nor with pure absorbing conditions $\partial\Omega=\Gamma_a$, since this would lead to higher order in time boundary terms in steps \eqref{a}, \eqref{b} of the derivation above. 

Note that we did not use the $\taubar$ terms of the respective energies in these full norm estimates and kept track of $\taubar$ dependence.
Therefore the constants $C_\infty$, $C_0$, $C_1$, $C_2$ are independent of $\tau$ and $\taubar$. 

\subsubsection*{Weak limits 
\commBK{and higher temporal regularity}
}
Energy and full norm estimates imply existence of weakly 
\commBK{\footnote{no weak* needed since all spaces are Hilbert}}
(in the weak topology of $U_{lo}$ induced by $\mathcal{E}_{lo}$) convergent subsequences with limits $\bar{u}^{lo}_M$ of $(u_{MJ})_{J\in\mathbb{N}}$ for each $M\in\mathbb{N}$ as well as $\bar{\bar{u}}^{lo}$ of $(\bar{u}^{lo}_M)_{M\in\mathbb{N}}$. 
Likewise, under the additional conditions of (i) and (ii), we have existience of subsequential limits (in the respect weak topologies induced by $\bar{\mathcal{E}}_{me}$ and $\bar{\mathcal{E}}_{hi}$, respectively) $\bar{\bar{u}}^{me}$ and $\bar{\bar{u}}^{hi}$, respectively, that due to uniqueness of limits have to coincide 
$\bar{\bar{u}}^{lo}=\bar{\bar{u}}^{me}=\bar{\bar{u}}^{hi}=:\bar{\bar{u}}$.

To prove that $\bar{\bar{u}}$ solves \eqref{ve_lo}, we proceed as in the usual way, see, e.g., \cite{EvansBook}, with the slight modification of considering a discrete - continuous transition not only on the (inner) spatial but also on the (outer) temporal level.
For each $M, \, j\in\mathbb{N}$ and 
for every $J\geq j$, we have that $u_{MJ}$ satisfies $(ve)$ for all $v\in U_{Mj}$; 
taking the limit as $J\to\infty$, we conclude that also $\bar{u}^{lo}_M$ satisfies $(ve)$ for all $v\in U_{Mj}$;
since $j\in\mathbb{N}$ was arbitrary, $\bar{u}^{lo}_M$ satisfies $(ve)$ for all $v\in \bar{U}_M:= \overline{\bigcup_{j\in\mathbb{N}}U_{Mj}}=
\{(t,x)\mapsto\Re\Bigl(\sum_{m=1}^M\exp(\imath m\omega t)\hat{v}_m(x)\Bigr)\, : \, \hat{v}_m\in H^1_D(\Omega)\}$.
Now for any $m\in\mathbb{N}$ and 
for every $M\geq m$, we have that $\bar{u}^{lo}_M$ satisfies $(ve)$ for all $v\in \bar{U}_m$;
taking the limit as $M\to\infty$, we conclude that also $\bar{\bar{u}}^{lo}$ satisfies $(ve)$ for all $v\in \bar{U}_m$;
since $m\in\mathbb{N}$ was arbitrary, $\bar{\bar{u}}^{lo}$ satisfies $(ve)$ for all $v\in V_{lo}= \overline{\bigcup_{m\in\mathbb{N}}\bar{U}_m}$;
Thus, we have constructed a variational solution to \eqref{JMGT_linearized_rtil} with \eqref{periodic}, \eqref{bndy}.

\commBK{
After having taken limits (carrying the energy estimates along to the limiting quantities due to weak lower semicontinuity of norms), we can also derive some stronger estimates on the third order time derivative. To this end, we consider \eqref{JMGT-linearized} as a pointwise in space ODE and use the following lemma.
\begin{lemma}
Assume $\tau>0$, $0<\underline{\alpha}\leq\alpha(t)\leq\overline{\alpha}$, $f\in L^p(0,T)$.\\
Then there exists $C(\alpha,T)>0$ such that any solution $y$ of $\tau y'(t)+\alpha(t) y(t)=h(t)$, $t\in(0,T)$, satisfies the estimate 
\[
\tau\|y'\|_{L^p(0,T)}\leq\|h\|_{L^p(0,T)}
\left(1+C(\alpha,T) \, \tau^{-1/p}\right)
\]
\end{lemma} 
\begin{proof}
From the solution formula 
$y(t)=\tfrac{1}{\tau}\Bigl(\frac{\int_0^T\Phi(s,T)h(s)\, ds}{1-\Phi(0,T)}\Phi(0,t)+\int_0^t\Phi(s,t) h(s)\, ds\Bigr)=\tfrac{1}{\tau}\int_0^T\Psi(s,t)h(s)\, ds$ with 
$\Phi(s,t)=\exp\left(-\tfrac{1}{\tau}\int_s^t\alpha(r)\, dr\right)$, $\Phi_t(s,t)=-\frac{\alpha(t)}{\tau}\Phi(s,t)$, $\Phi(t,t)=1$,
$\Psi(s,t)=\frac{\Phi(s,T)\Phi(0,t)}{1-\Phi(0,T)}+1_{(0,t]}(s)\Phi(s,t)$, $\Psi_t(s,t)=-\frac{\alpha(t)}{\tau}\Psi(s,t)+\delta_t(s)$,
$\|\Psi(\cdot,t)\|_{L^1(0,T)}\leq C_1(\alpha,T)\tau$, 
$\|\Psi(\cdot,t)\|_{L^\infty(0,T)}\leq C_\infty(\alpha,T)$, 
$\|\Psi(\cdot,t)\|_{L^{p^*}(0,T)}\leq C_\infty(\alpha,T)^{1/p}C_1(\alpha,T)^{1/p^*}\tau^{1/p^*}$, 
we obtain, using H\"older's inequality,
\[
\begin{aligned}
&\|\tau y'\|_{L^p(0,T)}
\leq \|h\|_{L^p(0,T)}+\left(\int_0^T\left|\tfrac{\alpha(t)}{\tau}\int_0^T\Psi(s,t) h(s)\, ds\right|^p\, dt\right)^{1/p}\\
&\leq \|h\|_{L^p(0,T)}\left(1+\tfrac{\overline{\alpha}}{\tau}
\left(\int_0^T\left(\int_0^T\Psi(s,t)^{p^*} \, ds\right)^{p/p^*}\, dt\right)^{1/p}
\right)\\
&\leq \|h\|_{L^p(0,T)}\left(1+\overline{\alpha}C_\infty(\alpha,T)^{1/p}C_1(\alpha,T)^{1/p^*} \, \tau^{-1/p}\right)
\end{aligned}
\]
\end{proof}
With $p=2$, we don't get anything better than what we already have.
}

\subsubsection*{Uniqueness}
Since the testing that led to the low regularity energy estimate can as well be carried out on a continuos level, the homogeneous equation with $\tilde{r}=0$ only has the trivial solution, which due to linearity implies uniqueness.

\medskip

Altogether we have proven Theorem~\ref{thm:JMGT-linearized}. 

\begin{remark}\label{rem:Eme}
In order to recover $\|u\|_{L^2(L^2(\Omega))}$, instead of combining \eqref{enid_me} with \eqref{enid_lo} in \eqref{energies_me}, we can make use of \eqref{Cell0} with \eqref{enid_me} alone 
in case $\Gamma_a=\emptyset$.
This allows us to avoid the differentiability assumption \eqref{smallnesscoeffs_lo} on the coefficients $b$ and $c^2$ in this case.
\end{remark}

\skipKuznetsov{
\begin{remark}\label{rem:tau0}
In case $\tau=0$, we 
augment the medium level energy estimate by testing \eqref{var_hi} with just $v=u$, to obtain an energy identity that results from \eqref{enid_hi} by setting $\taubar=0$, $\sigma=0$, $\rho=1$
\begin{equation}\label{enid_hi_tau0}
\begin{aligned}
0=&\int_0^T\Bigl\{\int_\Omega \Bigl( (\Delta\alpha u_{tt}+2\nabla\alpha\cdot \nabla u_{tt})\cdot\Delta u
-\alpha |\Delta u_t|^2
-\alpha_t \Delta u_t\, \Delta u
+  c^2(\nabla\Delta u)^2
\\&\qquad\qquad
+\bigl(\nabla b\, \Delta u_t+\nabla c^2\, \Delta u-\nabla\rtil\bigr)\cdot\nabla\Delta u
\Bigr)\, dx 
\\&\qquad
+\int_{\Gamma_a\cup\Gamma_i} \bigl(
-\beta b(\Delta u_t)^2  
+\gamma c^2  (\Delta u)^2
\bigr)\, dS
+\int_{\partial\Omega\setminus\Gamma_D}\partial_\nu\rtil\, \Delta u\, dS
\Bigr\}\, dt
\end{aligned}
\end{equation}
with $u=u_{MJ}$.
In case $\Gamma_a=\emptyset$ (to avoid the negative boundary term) and assuming smallness of 
$\|\nabla b\|_{L^\infty(\Omega)}$ and $\|\nabla c^2\|_{L^\infty(\Omega)}$, 
adding a sufficiently small multiple of \eqref{enid_hi_tau0} to \eqref{enid_me}, we arrive at  
\begin{equation}\label{enest_me_tau0}
\begin{aligned}
&\mathcal{E}_{0\,me}(u):=
\|u\|_{H^2(H^1(\Omega))}^2+\|\Delta u\|_{H^1(L^2(\Omega))}^2+ \|\sqrt{\gamma} u\|_{H^2(L^2(\Gamma))}^2+\|\nabla\Delta u\|_{L^2(L^2(\Omega))}^2
\\
&\lesssim \|\rtil\|_{L^2(H^1(\Omega))}^2
\end{aligned}
\end{equation}
We will need this in the well-posedness proof of Kuznetsov's equation on a medium regularity level, see  Theorem~\ref{thm:Kuznetsov}.
\end{remark}
}

\section{Nonlinear well-posedness and differentiability: proof of Theorems~\ref{thm:JMGT-Westervelt}, \ref{thm:JMGT-Kuznetsov}, 
\skipKuznetsov{\ref{thm:Kuznetsov},} 
\ref{thm:diff}}\label{sec:nonlinearwellposedness}

Given the linear results from Theorem~\ref{thm:JMGT-linearized}, the well-posendess proof for the nonlinear equations \eqref{JMGT-Westervelt}, \eqref{JMGT-Kuznetsov} under periodicity conditions proceeds along the lines of the initial value problem analogs, in, e.g., \cite{KLM12_MooreGibson,KLP12_JordanMooreGibson,JMGT,JMGT_Neumann}.
We therefore here only elaborate the essential arguments.

To prove existence and uniqueness of solutions to
\begin{equation}\label{PDE_DN}
\begin{aligned}
&\mathcal{D} u + \mathcal{N}(u)=0\ \text{ with \eqref{periodic} and \eqref{bndy}}\\
&\text{ where }\begin{cases}
\mathcal{D}=\tau \partial_t^3 + \partial_t^2 + (b\partial_t+c^2\text{id})(-\Delta)\\
\mathcal{N}(u)=\eta (u^2)_{tt} +f\text{ or }\mathcal{N}(u)=\bigl(\tilde{\eta}u_t^2+|\nabla u|^2\bigr)_t +f
\end{cases}
\end{aligned}
\end{equation}
we apply Banach's contraction principle to the operator $\mathcal{T}:\mathcal{B}_\radius^U(\subseteq U)\to U$, $u^-\mapsto u$ solving \eqref{PDE_DN} with $\mathcal{N}(u)$ replaced by $\mathcal{N}(u^-)$.
Here $\mathcal{B}_\radius^U$ is a sufficiently small ball and $U=U_{me}$ for JMGT-Westervelt, $U=U_{hi}$ for JMGT-Kuznetsov, $f\in\mathcal{B}_{\radius_{J*}}^Z(\subseteq Z)$ with $\radius_{J*}>0$ chosen small enough and the two corresponding cases $Z=Z_{me}$, $Z=Z_{hi}$.

Well-definedness and the self-mapping property of $\mathcal{T}$ follows from parts (ii) and (iii) of Theorem~\ref{thm:JMGT-linearized}, as long as we can prove that the coefficients satisfy the conditions of this theorem and $\mathcal{N}(u^-)$ is small enough in $Z$.
In fact we will obtain slightly more than invariance on $\mathcal{B}_\radius^U$, namely $\mathcal{T}(\mathcal{B}_\radius^{U^0})\subseteq \mathcal{B}_\radius^U$ with $U^0$ as defined in \eqref{U0}; that is, the images under $\mathcal{T}$ will be estimated without using the $\tau$ dependent parts in the energy. This allows us to include the case $\tau=0$ in Theorems~\ref{thm:JMGT-Westervelt}, \ref{thm:JMGT-Kuznetsov}.

The contraction property of $\mathcal{T}$ is achieved by considering the (linear) PDE satisfied by the difference between two solutions $\hat{u}=u_1-u_2$ where $u_i=T(u^-_i)$, $u^-_i\in \mathcal{B}_\radius^U$, $i=1,2$ and $\hat{u}^-=u^-_1-u^-_2$. Again, Theorem~\ref{thm:JMGT-linearized} is essential for obtaining a bound $\|\hat{u}\|_U\leq L\|\hat{u}^-\|_U$ and by smallness of $\mathcal{B}_\radius^U$ we can achieve that the Lipschitz constant $L$ is smaller than unity.

Since very similar bounds and again an application of Theorem~\ref{thm:JMGT-linearized} lead to differentiability and Lipschitz continuity of the derivative of the source-to-state map $\mathcal{S}:Z\to U$, $f\mapsto u$ solving \eqref{PDE_DN}, we will carry out the estimates for all these purposes jointly. 

With the notation from \eqref{PDE_DN},
the tasks
\[
\begin{aligned}
&\text{(sm) 
$\mathcal{T}(\mathcal{B}_\radius^{U^0})\subseteq \mathcal{B}_\radius^U$,} \quad&& 
\text{(df0) existence of a 
first variation
$\mathcal{S}'(u)$,} \\
&\text{(ct) contractivity of $\mathcal{T}$,} && 
\text{(df1) Fr\'{e}chet differentiability of $\mathcal{S}$,}\\
&&&\text{(df2) Lipschitz continuity of the derivative $\mathcal{S}'$,}
\end{aligned}
\]
for $\hat{u}=u_1-u_2$, $\hat{u}^-=u^-_1-u_2^-$, $\uld{u}=\mathcal{S}'(f)\uld{f}$, $\tilde{u}=\mathcal{S}(f+\uld{f})-\mathcal{S}(f)-\mathcal{S}'(f)\uld{f}$, $\tilde{\tilde{u}}=\mathcal{S}'(f_\sim)\uld{f}-\mathcal{S}'(f)\uld{f}$, $u_\sim=\mathcal{S}(f_\sim)$
read as
\[
\begin{aligned}
\text{(sm) }&\mathcal{D} u + \mathcal{N}(u^-)=0 \ \stackrel{!}{\Rightarrow} \ \|u\|_U\leq C (\|u\|_U^2+\|f\|_Z)\leq C(\radius^2+\radius_{J*})
\\
\text{(ct) }&\mathcal{D} \hat{u} + \mathcal{N}(u^-_1)-\mathcal{N}(u^-_2)=0 \ \stackrel{!}{\Rightarrow} \ \|\hat{u}\|_U\leq C (\|u^-_1\|_U+\|u^-_2\|_U) \|\hat{u}^-\|_U\leq 2C\radius \|\hat{u}^-\|_U\\
\text{(df0) }&\mathcal{D} \uld{u} + \mathcal{N}'(u)\uld{u}+\uld{f}=0 \ \stackrel{!}{\Rightarrow} \ \|\uld{u}\|_U\leq C (\|u\|_U\|\uld{u}\|_U+\|\uld{f}\|_Z) \\
&\hspace*{4.9cm} \leq C (\radius \|\uld{u}\|_U+\|\uld{f}\|_Z) 
\ \Rightarrow \ \|\uld{u}\|_U\leq \tfrac{C}{1-C\radius}\|\uld{f}\|_Z
\\
\text{(df1) }&\mathcal{D} \tilde{u} + \mathcal{N}(u+\uld{u})-\mathcal{N}(u)-\mathcal{N}'(u)\uld{u}=0 \ \stackrel{!}{\Rightarrow} \ \|\tilde{u}\|_U\leq C(\|u\|_U+1) \|\uld{u}\|_U^2 \\
\text{(df2) }&\mathcal{D} \tilde{\tilde{u}} + (\mathcal{N}'(u_\sim)-\mathcal{N}'(u))\uld{u}=0 \ \stackrel{!}{\Rightarrow} \ \|\tilde{\tilde{u}}\|_U\leq C (\|u_\sim\|_U+\|u\|_U)\|u_\sim-u\|_U\|\uld{u}\|_U\\
&\hspace*{8.3cm}\leq 2C\radius\left(\tfrac{C}{1-C\radius}\right)^2\|f_\sim-f\|_U\|\uld{f}\|_U
\end{aligned}
\]
with
\[
C(\radius^2+\radius_{J*})\leq\radius\quad\text{ and }\quad 2C\radius<1
\]
provided $\radius$ and $\radius_{J*}$ are chosen small enough.

The proof step indicated by $\stackrel{!}{\Rightarrow}$ consists of a combination of the application of Theorem~\ref{thm:JMGT-linearized} with a verification of its assumptions for the inhomogeneities.
The following choices of $\rtil$ will thus be relevant for these tasks.
\begin{itemize}
\item In the JMGT-Westervelt case \eqref{JMGT-Westervelt}:
\\
The choice directly corresponding to \eqref{alphar} and analogous to \cite{periodicWestervelt} would be 
\\[1ex]
\begin{tabular}{l||l|l}
&$\alpha$		&$\rtil$		
\\ \hline
(sm)& $1+2\eta u^-$		&$2\eta u^-_t\, u_t +f$\\
(ct)& $1+2\eta u^-_1$	&$2\eta (\hat{u}^-u_{2tt}+u^-_{1t}\, u_{1t}-u^-_{2t}\, u_{2t})$\\
(df0)& $1+2\eta u$	&$
4\eta u_t\uld{u}_t+2\eta u_{tt}\, \uld{u}
+\uld{f}$
\end{tabular}
\\[1ex]
We here follow the choice $\alpha=1$ analogous to \cite{periodicWest_2}, though, 
which simplifies the treatment of the terms pertaining to derivatives $\alpha$ in \eqref{smallnesscoeffs_lo}, \eqref{smallnesscoeffs_me} (at the cost of having to estimate further terms in $\rtil$; 
note that the need of $L^\infty(0,T;L^\infty(\Omega))$ estimates of the states $u^-$, $u^-_1$, $u$, persists.)
\\[1ex]
\begin{tabular}{l|l}
&$\rtil$		
\\ \hline
(sm)&$\eta \bigl((u^-)^2\bigr)_{tt}+f
$\\
(ct)&$\eta \bigl((u^-_1)^2-(u^-_2)^2\bigr)_{tt}
				 =\eta \bigl((u^-_1+u^-_2)\hat{u}^-\bigr)_{tt}
$\\
(df0)&$2\eta \bigl(u\uld{u}\bigr)_{tt}
+\uld{f}
$\\
(df1)&$2\eta \bigl((\uld{u})^2\bigr)_{tt}
$\\
(df2)&$2\eta \bigl((u_\sim-u)\uld{u}\bigr)_{tt}
$
\end{tabular}
\\[1ex]
Our goal is to bound $\rtil$ on the medium level energy 
\begin{equation}\label{todos_me}
\begin{aligned}
&\|\rtil_{(sm)}\|_{Z_{me}}\lesssim
\|u^-\|_{U_{me}}^2
+\|f\|_{Z_{me}} \\
&\|\rtil_{(ct)}\|_{Z_{me}}\lesssim
(\|u^-_1\|_{U_{me}}+\|u^-_2\|_{U_{me}})\, \|\hat{u}^-\|_{U_{me}}
\\
&\|\rtil_{(df0)}\|_{Z_{me}}\lesssim
\|u\|_{U_{me}}\, \|\uld{u}\|_{U_{me}}
+\|\uld{f}\|_{Z_{me}}\\
&\|\rtil_{(df1)}\|_{Z_{me}}\lesssim
(\|u\|_{U_{me}}+1)\|\uld{u}\|_{U_{me}}^2\\
&\|\rtil_{(df2)}\|_{Z_{me}}\lesssim
(\|u_\sim\|_{U_{me}}+\|u\|_{U_{me}})\|u_\sim-u\|_{U_{me}}\, \|\uld{u}\|_{U_{me}}
.
\end{aligned}
\end{equation}
To this end, we use the fact that they are of similar structure, namely, up to the $f$ and $\uld{f}$ terms, they are of the form
\[
r[v,w]=\eta(v\,w)_{tt}=\eta(v\,w_{tt}+2v_tw_t+v_{tt}w) 
\]
with 
\begin{equation}\label{vw}
\begin{cases}
(sm)&\ v=w=u^-\\
(ct)&\ v=u^-_1+u^-_2, \ w=\hat{u}^-\\
(df0)&\ v=2u, \ w=\uld{u}\\
(df1)&\ v=w=2\uld{u}\\
(df2)&\ v=2(u_\sim-u), \ w=\uld{u}\\
\end{cases} 
\end{equation}
and $r[v,w]$ inherits the homogeneous Dirichlet boundary conditions on $\Gamma_D$ from the states $u^-$, $u^-_i$, $u$, and $u_\sim$, respectively, so that we can set $\rtil^t=r[v,w]$, $r^\nabla=f$.
It thus suffices to estimate
\[
\begin{aligned}
&\|\nabla r[v,w]\|_{L^2(L^2(\Omega))}=\|\nabla \eta (v\,w_{tt}+2v_tw_t+v_{tt}w)\\
&\qquad+\eta (\nabla v\,w_{tt}+v\,\nabla w_{tt}+2\nabla v_t\,w_t+2v_t\,\nabla w_t+\nabla v_{tt}\, w+v_{tt}\, \nabla w)\|_{L^2(L^2(\Omega))}
\end{aligned}
\]
where 
\[
\begin{aligned}
&\|\nabla \eta \, v\,w_{tt} \|_{L^2(L^2(\Omega))}
\leq \|\nabla \eta\|_{L^{\hat{p}}(\Omega)} \|v\|_{L^\infty(L^\infty(\Omega))} \|w_{tt} \|_{L^2(L^p(\Omega))}
\\
&\|\nabla \eta \, v_t\, w_t \|_{L^2(L^2(\Omega))}
\leq \|\nabla \eta\|_{L^{\hat{p}}(\Omega)} \|v_t\|_{L^4(L^{2p}(\Omega))} \|w_t \|_{L^4(L^{2p}(\Omega))}
\\
&\|\eta\, \nabla v\,w_{tt}\|_{L^2(L^2(\Omega))}
\leq \|\eta\|_{L^\infty(\Omega)} \|\nabla v\|_{L^\infty(L^{\hat{p}}(\Omega))} \|w_{tt} \|_{L^2(L^p(\Omega))}
\\
&\|\eta\, v\,\nabla w_{tt}\|_{L^2(L^2(\Omega))}
\leq \|\eta\|_{L^\infty(\Omega)} \|v\|_{L^\infty(L^\infty(\Omega))} \|\nabla w_{tt} \|_{L^2(L^2(\Omega))}
\\
&\|\eta\, \nabla v_t\,w_t\|_{L^2(L^2(\Omega))}
\leq \|\eta\|_{L^\infty(\Omega)} \|\nabla v_t\|_{L^\infty(L^{\hat{q}}(\Omega))} \|w_t \|_{L^2(L^q(\Omega))}
\end{aligned}
\]
and by interpolation
\[
\begin{aligned}
\|\nabla v_t\|_{L^\infty(L^{\hat{q}}(\Omega))}
&\leq C_{H^{3/4}\to L^\infty}^{(0,T)} C_{H^{1/8}\to L^{\hat{q}}}^\Omega
\|\nabla v_t\|_{H^{3/4}(H^{1/8}(\Omega))}\\
&\leq C_{H^{3/4}\to L^\infty}^{(0,T)} C_{H^{1/8}\to L^{\hat{q}}}^\Omega
\|\nabla v_t\|_{H^1(L^2(\Omega))}^{3/4}
\|\nabla v_t\|_{L^2(H^{1/2}(\Omega))}^{1/4}.
\end{aligned}
\]
Here we choose $p=3$, $\hat{p}=6$, $2p=6$, $q=8d$, $\hat{q}=\frac{8d}{4d-1}$, so that the embeddings $H^1(\Omega)\to L^{\hat{p}}$, $H^1(\Omega)\to L^{2p}$, $H^{3/2}(\Omega)\to L^{q}$, $H^{1/8}(\Omega)\to L^{\hat{q}}$ are continuous.\\
The remaining terms can be estimated analogously.
Altogether, this implies \eqref{todos_me}.
\item In the JMGT-Kuznetsov case \eqref{JMGT-Kuznetsov}:
\\[1ex]
\begin{tabular}{l|l}
&$\rtil$		
\\ \hline
(sm)&$\bigl(\eta(u^-_t)^2+|\nabla u^-|^2\bigr)_t+f$\\
(ct)&$\bigl(\eta(u^-_1+u^-_2)_t\hat{u}^-_t+\nabla (u^-_1+u^-_2)\cdot\nabla\hat{u}^-\bigr)_t$\\
(df0)&$2\bigl(\eta u_t\uld{u}_t+\nabla u\cdot\nabla\uld{u}\bigr)_t +\uld{f}$\\
(df1)&$ 2\bigl(\eta \uld{u}_t^2+|\nabla\uld{u}|^2\bigr)_t$\\
(df2)&$2\bigl(\eta (u_\sim-u)_t\uld{u}_t+\nabla (u_\sim-u)\cdot\nabla\uld{u}\bigr)_t$
\end{tabular}
\\[1ex]
and again $\alpha=1$.
We will work on the high order energy level, thus replacing $me$ by $hi$ in \eqref{todos_me}, again using some  common bilinear structure, which here reads as 
\begin{equation}\label{r_K}
r[v,w]=\bigl(\eta v_t\,w_t+\nabla v\cdot\nabla w\bigr)_t=\eta(v_t\,w_{tt}+v_{tt}\,w_t)+\nabla v\cdot\nabla w_t+\nabla v_t\cdot\nabla w
\end{equation}
with $v,w$ as in \eqref{vw}.
To this end, we have to bound
\[
\begin{aligned}
&\|\Delta r[v,w]\|_{L^2(L^2(\Omega))}\\
&=\|\Delta\eta(v_t\,w_{tt}+v_{tt}w_t)+2\nabla \eta\cdot(\nabla v_t\,w_{tt}+v_t\,\nabla w_{tt}+\nabla v_{tt}\,w_t+v_{tt}\, \nabla w_t)\\
&\qquad+\eta(\Delta v_t\,w_{tt}+2\nabla v_t\cdot \nabla w_{tt}+v_t\,\Delta w_{tt}+\Delta v_{tt}\,w_t+2\nabla v_{tt}\cdot\nabla w_t+v_{tt}\,\Delta w_t)\\
&\qquad+ \nabla \Delta v\cdot\nabla w_t+2D^2 v:D^2 w_t+\nabla v\cdot\nabla \Delta w_t\\
&\qquad+\nabla\Delta  v_t\cdot\nabla w+2D^2 v_t:D^2 w+\nabla v_t\cdot\nabla \Delta w \|_{L^2(L^2(\Omega))}
\end{aligned}
\]
where
\[
\begin{aligned}
&\|\Delta\eta\,v_t\,w_{tt} \|_{L^2(L^2(\Omega))}
\leq \|\Delta\eta\|_{L^2(\Omega)} \|v_t\|_{L^\infty(L^\infty(\Omega))} \|w_{tt} \|_{L^2(L^\infty(\Omega))}
\\
&\|\nabla \eta\cdot\nabla v_t\,w_{tt} \|_{L^2(L^2(\Omega))}
\leq \|\nabla\eta\|_{L^{\hat{p}}(\Omega)} \|\nabla v_t\|_{L^\infty(L^p(\Omega))} \|w_{tt} \|_{L^2(L^\infty(\Omega))}
\\
&\|\nabla \eta\cdot v_t\,\nabla w_{tt}\|_{L^2(L^2(\Omega))}
\leq \|\nabla\eta\|_{L^{\hat{p}}(\Omega)} \| v_t\|_{L^\infty(L^\infty(\Omega))} \|\nabla w_{tt} \|_{L^2(L^p(\Omega))}
\\
&\|\eta\,\Delta v_t\,w_{tt} \|_{L^2(L^2(\Omega))}
\leq\|\eta\|_{L^\infty(\Omega)} \|\Delta v_t \|_{L^\infty(L^2(\Omega))} \|w_{tt} \|_{L^2(L^\infty(\Omega))}
\\
&\|\eta\,\nabla v_t\cdot \nabla w_{tt} \|_{L^2(L^2(\Omega))}
\leq\|\eta\|_{L^\infty(\Omega)} \|\nabla v_t\|_{L^\infty(L^{\hat{p}}(\Omega))} \|\nabla w_{tt} \|_{L^2(L^p(\Omega))}
\\
&\|\eta\,v_t\,\Delta w_{tt} \|_{L^2(L^2(\Omega))}
\leq\|\eta\|_{L^\infty(\Omega)} \| v_t\|_{L^\infty(L^\infty(\Omega))} \|\Delta w_{tt} \|_{L^2(L^2(\Omega))}
\\
&\|\nabla \Delta v\cdot\nabla w_t \|_{L^2(L^2(\Omega))}
\leq \|\nabla \Delta v\|_{L^\infty(L^2(\Omega))}\|\nabla w_t \|_{L^2(L^\infty(\Omega))}
\\
&\|D^2 v:D^2 w_t \|_{L^2(L^2(\Omega))}
\leq \|D^2 v\|_{L^\infty(L^4(\Omega))}\|D^2 w_t \|_{L^2(L^4(\Omega))}
\\
&\|\nabla v\cdot\nabla \Delta w_t \|_{L^2(L^2(\Omega))}
\leq \|\nabla v\|_{L^\infty(L^\infty(\Omega))} \|\nabla \Delta w_t \|_{L^2(L^2(\Omega))},
\end{aligned}
\]
and the rest can again be estimated analogously.

\skipKuznetsov{
In the Kuznetsov case 
\eqref{Kuznetsov}:\\
The inhomogeneity $\rtil$ is the same as in \eqref{r_K}, but we intend to make use of the medium level energy estimate \eqref{enest_me_tau0} and thus only need to bound $\|\rtil\|_{L^2(H^1(\Omega))}^2$ in terms of $\mathcal{E}_{0\,me}(v)$ and $\mathcal{E}_{0\,me}(w)$ with $v,w$ as in \eqref{vw}. 
To this end, we have to bound
\[
\begin{aligned}
&\|\eta v_t\,w_{tt}\|_{L^2(L^2(\Omega))}
\leq \|\eta\|_{L^\infty(\Omega)} \|v_t\|_{L^\infty(L^\infty(\Omega))} \|w_{tt}\|_{L^2(L^2(\Omega))}
\\
&\|\nabla v\cdot\nabla w_t\|_{L^2(L^2(\Omega))}
\leq \|\nabla v\|_{L^\infty(L^4(\Omega))}\|\nabla w_t\|_{L^2(L^4(\Omega))}
\\
&\|\nabla(\eta v_t\,w_{tt})\|_{L^2(L^2(\Omega))}
=\|\nabla\eta (v_t\,w_{tt})+\eta(\nabla v_t\, w_tt +v_t\, \nabla w_{tt}\|_{L^2(L^2(\Omega))}
\\
&\|\nabla(\nabla v\cdot\nabla w_t)\|_{L^2(L^2(\Omega))}
=\|D^2 v\, \nabla w_t + \nabla v D^2 w_t\|_{L^2(L^2(\Omega))}
\end{aligned}
\]
In view of the available estimate on $\|\nabla w_{tt}\|_{L^2(L^2(\Omega))}$, we need estimates on 
$\|\nabla v\|_{L^\infty(L^\infty(\Omega))}$ and on $\|v_t\|_{L^\infty(L^\infty(\Omega))}$
Using interpolation we can bound $\|\nabla v\|_{L^\infty(L^\infty(\Omega))}$ by 
in case $d<3$:
\[
\begin{aligned}
\|\nabla v\|_{L^\infty(L^\infty(\Omega))} 
&\leq C_{H^s\to L^\infty}^{(0,T)} C_{H^r\to L^\infty}^\Omega \|\nabla v\|_{H^s(H^r(\Omega))}\\
&\leq C_{H^s\to L^\infty}^{(0,T)} C_{H^r\to L^\infty}^\Omega \|\nabla v\|_{H^2(L^2(\Omega))}^{s/2} 
\|\nabla v\|_{L^2(H^{\frac{2r}{2-s}}(\Omega))}^{1-s/2}
\end{aligned}
\]
with $s\in(\frac12,1)$, $r>\frac{d}{2}$. In view of the available estimate on $\|\nabla\Delta v\|_{L^2(L^2(\Omega))}$ we have to constrain the order $\frac{2r}{2-s}\leq 2$, which together with $s\in(\frac12,1)$, $r>\frac{d}{2}$ yields $2>\frac{2d}{3}$.
\\
However, an estimate of $\|v_t\|_{L^\infty(L^\infty(\Omega))}$ requires $d< 2$, which can be seen as follows.
\[
\begin{aligned}
\|v_t\|_{L^\infty(L^\infty(\Omega))} 
&\leq C_{H^s\to L^\infty}^{(0,T)} C_{H^r\to L^\infty}^\Omega \|v_t\|_{H^s(H^r(\Omega))}\\
&\leq C_{H^s\to L^\infty}^{(0,T)} C_{H^r\to L^\infty}^\Omega \|v_t\|_{H^1(L^2(\Omega))}^s \|v_t\|_{L^2(H^{\frac{r}{1-s}}(\Omega))}^{1-s}
\end{aligned}
\]
with $s\in(\frac12,1)$, $r>\frac{d}{2}$. Since with $\mathcal{E}_{me}(v)$ we can at most bound the $L^2(H^2(\Omega))$ norm of $v_t$, we have to impose $\frac{r}{1-s}$. Together with the previous conditions $s\in(\frac12,1)$, $r>\frac{d}{2}$ this implies $2>d$.  
}
\end{itemize}

\bigskip

\commBK{
For (Lipschitz continuous) differentiability of the source-to-state map $S:f\mapsto u$ we make use of the bilinear structure of the nonlinearity:\\ 
A bilinear operator 
\[
B:O^2\subseteq U^2\to Z, \qquad (u,v)\mapsto B(u,v)
\] 
with $O=\mathcal{U}_\radius(0)$ being a neighborhood of zero has Lipschitz continuous Fr\'{e}chet derivative 
$B'(u,v)(\uld{u},\uld{v})=B(\uld{u},v)+B(u,\uld{v})$ with Taylor remainder 
$B(u+\uld{u},v+\uld{v})-B(u,v)-B'(u,v)(\uld{u},\uld{v})=B(\uld{u},\uld{v})$, if $B$ is bounded in the sense that
\begin{equation}\label{Bbounded}
\|B(u,v)\|_Z\leq C \|u\|_U \|v\|_U \text{ for all }u,v\in O
\end{equation}
holds.
We combine this with an implicit function theorem argument using the fact that $S$ is implicitly defined by 
\[
0=A(u,f)=Lu+B(u,u)+f \quad \Leftrightarrow \quad u=S(f)
\]
where $L:U\to Z$ is the differential operator 
\[
L=\tau\partial_t^3+\partial_t^2+(b\partial_t+c^2)(-\Delta)
\]
Indeed the assumptions of the implicit function theorem are satisfied as follows.
\begin{itemize}
\item $A(0,0)=0$
\item $\partial_u A(u,f)=L+B(u,\cdot)+B(\cdot,u):U\to Z$ is an isomorphism for the function space pairs $(U,Z)=(U_{me},Z_{me})$ and $(U,Z)=(U_{hi},Z_{hi})$ for the JMGT-Westervelt and JMGT-Kuznetsov case, respectively.
\item $\partial_u A$ is Lipschitz continuous due to \eqref{Bbounded}.
\end{itemize}
Thus $S$ is well-defined and Lipschitz continuously differentiable in a neighborhood of zero.\\
This proves Theorem~\ref{thm:diff}.
}

\medskip

\commBK{
next paper:
\begin{itemize}
\item multiharmonic expansion:\\
maybe do simulations with Benjamin's code, using the fact that the left hand side in the multiharmonic system is still a Helmholtz differential operator 
$-\Delta-\kappa_m^2$ with 
$\kappa_m^2=\frac{\imath m^3\omega^3+m^2\omega^2}{\imath m\omega b+c^2}$
\item inverse problem:\\ 
Fr\'{e}chet differentiability of red (and aao) forward operator (setting $\uld{f}=\uld{\eta}(u^2)_tt$;\\ linearized uniqueness
(as in nonlinearityimaging-2d, but using unique continuation, see ECM2024 Invited; but mention that $M$ is a compact perturbation of the identity (multiply both sides with $m\omega$ if needed); re-define $t_m=\frac{2\pi}{m\omega}$ to give it a physical meaning: length of minimal period in time domain)\\
\end{itemize}
}

\section*{Acknowledgment}
This research was funded in part by the Austrian Science Fund (FWF) 
[10.55776/P36318]. 

\end{document}